\documentclass{article}
\usepackage{amsmath, amsthm,graphicx,amsfonts,amssymb,mathrsfs,showidx}
\usepackage{hyperref}
\usepackage{tikz-cd} 
\usepackage{ytableau}
\usepackage{booktabs}

\usepackage{tikz}
\usetikzlibrary{tqft}

\newtheorem{thm}{Theorem}
\newtheorem{cor}[thm]{Corollary}

\newtheorem{lem}[thm]{Lemma}
\newtheorem{prop}[thm]{Proposition}

\theoremstyle{definition}

\theoremstyle{definition}

\theoremstyle{remark}

\hypersetup{
    colorlinks=true,
    linkcolor=blue,
    filecolor=blue,      
    urlcolor=cyan,
    citecolor=blue,
}

\usepackage{relsize}
\usepackage{color}
\usepackage[english]{babel} 

\definecolor{rojo}{rgb}{1,0,0}
\usepackage[all]{xy}
\makeindex
\usepackage{hyperref}
\usepackage{color}
\definecolor{abelian}{cmyk}{0.50,0,1,.4}
\definecolor{noabelian}{cmyk}{0.94,0.54,0,0}
\definecolor{rojo}{cmyk}{0,1,1,0}
\definecolor{verde}{cmyk}{0.91,0,0.88,0.12}
\xyoption{arc}

\title{The Riemann Hypothesis through the looking of partitions}
\author{Carlos Segovia\thanks{SECIHTI-UNAM-Oaxaca,  M\'{e}xico. {\em e-mail: }{\tt csegovia@im.unam.mx}}}
\date{}

\begin{document}

\maketitle

\begin{abstract}
An equivalence of the Riemann Hypothesis due to Espinosa reveals a direct bridge to the theory of integer partitions. Building on this formulation, we analyze the asymptotic behavior of a family of combinatorial sums \(A_r(n)\), called Espinosa's branches, expressed in terms of monomial symmetric polynomials. 
In this work, we propose that the Riemann Hypothesis splits into the successive realization of each Espinosa branch by a concrete subset of divisors. In this direction, we rigorously establish the first step: the branch $r=1$ is fully realized. For the higher branches, we define the asymptotic proportions  
$\rho_r=\lim_{n\to\infty} A_r(n)/(n\log\log n)$
and we compute exactly the contribution of the hook-shaped family of partitions $[r,1^l]$, denoted by $\tilde{\rho}_r$, which accounts for $91.85\%$ of the total classical constant $e^\gamma\approx 1.781072417990\ldots$ The remaining proportion, coming from all other partitions, satisfies $\lim_{r\to\infty} \rho_r/(\rho_r-\tilde{\rho}_r) = 1$.
Assuming the Alaoglu-Erd\"os conjecture, we use the list of 10,000 existing colossally abundant numbers (OEIS A004490, A073751) to yield explicit cutoff divisors realizing the first seven Espinosa branches. We present computational evidence showing how these first realizations appear, supporting the proposed divisor-realization framework.

\end{abstract}

\section*{Introduction}
In 1859, Riemann, in his seminal memoir \cite{Rie59}, introduced the zeta function $\zeta(s)$ and conjectured that all of its nontrivial zeros have real part equal to $1/2$. This statement, the Riemann Hypothesis, has since become one of the most important open problems in mathematics. Over time, several striking equivalences have emerged, linking it to the growth of arithmetic functions. Notably, Robin \cite{Ro84} established that the Riemann hypothesis is equivalent to the inequality
\begin{equation}
    \frac{\sigma(n)}{n}<e^\gamma \log\log n,\quad\textrm{ for }n>5040\,,
\end{equation}
with $\sigma(n)$ the sum of divisors and $\gamma$ the Euler–Mascheroni constant. 
Later, Lagarias \cite{Lag02} provided an elementary equivalence of the Riemann hypothesis with the inequality:
\begin{equation}
    \sigma(n)\leq H_n+e^{H_n}\log H_n\,,
\end{equation}
where $H_n$ is the $n$-Harmonic number. More recently, Espinosa \cite{Da25} contributed a new equivalence of the Riemann hypothesis via the inequality: 
    \begin{equation}\label{e1}
    A(n):=\sum_{k=0}^\infty a_k\frac{(H_n)^k}{k!}\geq \sigma(n)\,,\end{equation} for $H_k-\gamma\leq a_k\leq\log(k)+\frac{1}{k}$, $k\geq 1$.

    Numerous equivalences of the Riemann hypothesis are known; however, the distinctive feature of this formulation is that it reveals a concrete connection with integer partitions. The motivation stems from certain combinatorial structures, which we refer to as Espinosa sums, defined as the partitions of $j$ with exactly $i$ parts: 
\begin{equation}
    E_{i,j}(n) :=  \sum_{\mu \vdash j,\, \ell(\mu) = i} \frac{m_\mu(x_1, \dots, x_n)}{\mu_1! \mu_2! \dots \mu_i!}\,.
\end{equation}
where $m_\mu$ denotes the monomial symmetric polynomial associated to the partition $\mu$. 
Building on these, we study the $r$-sum, which we call Espinosa's branches 
\begin{equation}
   A_r(n)=\sum_{i=1}^n\log(i+r)E_{i,i+r-1} (n)\,, 
\end{equation}
and by setting $x_i = 1/i$ for $i=1,\ldots,n$, we recover $A(n)=\sum_{r=1}^\infty A_r(n)$.

The case $r=1$ corresponds to a well-understood logarithmic combinatorial structure \cite{ABT00}:
\begin{equation} A_1(n)=\sum_{m=1}^n\log(m+1)e_m(n)\,,\end{equation} 
where the elementary symmetric polynomials, evaluated as $e_m(n):=e_m\left(1, \frac{1}{2}, \ldots, \frac{1}{n}\right)$, admit the probabilistic interpretation $e_m(n)=(n+1)\,\mathbb{P}(C_{n+1}=m+1)$ and $A_1(n)=(n+1)\mathbb{E}[\log C_{n+1}]$. Here $C_n$ denotes the number of cycles in a random permutation of $n$ elements, chosen uniformly. The first asymptotic proportion is given by 
\begin{equation}
    \rho_1:=\lim_{n\rightarrow\infty}\frac{A_1(n)}{n\log\log n}=1\,.
\end{equation}
For $r\geq 2$, we find $\rho_2:=\lim_{n\to\infty} A_2(n)/n\log\log n=1/2$. To analyze higher values of $r$, we consider a distinguished sub-summand of $A_r(n)$:
\begin{equation}
\tilde{A}_{r}(n) = \frac{1}{r!} \sum_{l=0}^{n-1} \log(l+r+1) \, m_{[r,1^l]}(x_1,x_2,\ldots,x_n)\,.
\end{equation}
which comprises the monomial symmetric polynomials $m_{[r,1^l]}$ associated with the hook-shaped family of partitions $[r]$, $[r,1]$, $[r,1,1]$, $\ldots$ (denoted by $[r,1^l]$ for shortly). For these families, we obtain the following closed formula for their asymptotic proportions, valid for \(r\geq 2\):
 \begin{equation}
        \tilde{\rho}_r:=\lim_{n\rightarrow\infty} \frac{\tilde{A}_r(n)}{n\log\log n}=\frac{1}{r!}\left[(-1)^r+\sum_{j=3}^r(-1)^{r+j}\zeta(j-1)\right]\,.
    \end{equation}
These proportions yield the following numerical values:

$$
\begin{array}{l|c|c|l}
r & \tilde{\rho}_r & \rho_r-\tilde{\rho}_r & S_r = \sum_{k=1}^r \rho_k \\
\hline
1 & 1 & 0 & 1 \\
2 & \frac{1}{2} & 0 & 1.5 \\
3 & \frac{-1+\zeta(2)}{6}
& \frac{2-\zeta(2)}{4}
& 1.696255494429 \\
4 & \frac{1-\zeta(2)+\zeta(3)}{24}
& \frac{5-\zeta(2)-2\zeta(3)}{24}
& 1.759091951227 \\
5 & \frac{-1+\zeta(2)-\zeta(3)+\zeta(4)}{120}
& \frac{\zeta(2)^2-24\zeta(2)-10\zeta(3)-7\zeta(4)+60}{288}
& 1.776074728193 \\
\vdots & \vdots & \vdots & \vdots \\
14
& 5.735841379278870\times 10^{-12}
& 1.34054104355\times 10^{-9}
& 1.781072417814269 \\
\vdots & \vdots & \vdots & \vdots \\
\infty
& 0
& 0
& e^{\gamma}=1.781072417990\ldots
\end{array}$$

In December 2025, the author posted on 
\href{https://mathoverflow.net/questions/506410/is-this-representation-of-e-gamma-in-terms-of-zeta-values-known}{Mathoverflow} a decomposition of $e^\gamma$ as a sum of the proportions $\rho_r$. Gómez-Aiza \cite{GA26} showed that $\lim_{r\to\infty} \rho_r/(\rho_r - \tilde{\rho}_r) = 1$,
and obtained precise formulas for the full proportions \(\rho_r\). The hook-shaped family $\sum_{i=1}^\infty\tilde{\rho}_i\approx 1.635904187444$ represents the $91.85 \%$ percentage of $e^\gamma$.

The author in \cite{Seg26} proposed the realization of Espinosa branches, where each branch is associated with a specific set of divisors. The underlying idea is that Espinosa's inequality can be established through the successive realization of the Espinosa branches.

For a positive integer \(N\) with sufficiently many divisors (and not a perfect square), we enumerate its divisors:
\begin{equation}
    1 = d_1 < d_2 < \cdots  <d_{\tau(N)}= N.
\end{equation}
We define the cutoff divisors:
\begin{equation}
    d^{(r)}(N) := \max\left\{d_j : \sum_{i=1}^j \frac{N}{d_i} \leq S_r N \log \log N \right\}.
\end{equation}
Assuming the Alaoglu-Erd\"os conjecture \cite{AlEr44}, we use the list of 10,000 existing colossally abundant numbers (OEIS \href{https://oeis.org/A004490}{A004490}, \href{https://oeis.org/A073751}{A073751}) to yield explicit cutoff divisors 
$d^{(r)}(N)\approx e^{-\gamma}(\log N)^{\alpha_r}$, where the first seven values of $\alpha_r$ are included in Table \ref{a_r} (the exponents $\alpha_r$ are obtained from the Dickman-de Bruijn distribution).
\begin{table}[htbp]
\centering
\begin{tabular}{c|c}
\toprule
\(r\) & \(\alpha_r\) \\
\midrule
1 & 1.0000000000 \\
2 & 1.7044181676 \\
3 & 2.3564438036 \\
4 & 2.9910563234 \\
5 & 3.6174876900 \\
6 & 4.2387077039 \\
7 & 4.8564920815 \\
\bottomrule
\end{tabular}
\caption{First values of the exponents $\alpha_r$.}
\label{a_r}
\end{table}
In this direction, we rigorously establish the first step: the branch $r=1$ is fully realized, see Section \ref{section3}, showing that for superabundant numbers \href{https://oeis.org/A004394}{A004394}, we have
\begin{equation}
    \lim_{N \rightarrow \infty} \frac{d^{(1)}(N)}{\log N} = e^{-\gamma}.
\end{equation}
This corrects a previous speculation by the author in \cite{Seg26}, which arose from the misleading closeness between the constants $\gamma$ and $e^{-\gamma}$. 

Evidence of progress on the Riemann Hypothesis depends completely on the realization of the Espinosa branches through subsets of divisors. An integer $N$ is called $r$-saturated if $d^{(r)}(N)=N$. For the partial sums
$
S_r=\sum_{i=1}^r \rho_i,
$
two cases arise: either
$
S_r \log\log N \geq \frac{\sigma(N)}{N},
$
in which case $d^{(r)}(N)=N$, or
$
S_r \log\log N < \frac{\sigma(N)}{N}.
$
In Figure \ref{fig1}, we plot the values of
$
S_r \log\log N - \frac{\sigma(N)}{N}
$
for all the 10,000 known colossally abundant numbers, displayed on a logarithmic scale. This would say that cutoff divisors are defined for extremely large colossally abundant numbers as $r\to\infty$. For $r=1$, we have realized the first Espinosa branch in this work, i.e, $\sum_{i=1}^{d^{(1)}(N)}N/d_i\leq N\log\log N$ for all $N$. 
If the realization can be extended through the 14th Espinosa branch for colossally abundant numbers, then the corresponding saturation range extends beyond $10^{10^{13.11485}}$, up to which Robin's inequality has been computationally verified by Morrill-Platt \cite{MP21}. 

\begin{figure}
    \centering
    \includegraphics[width=1\linewidth]{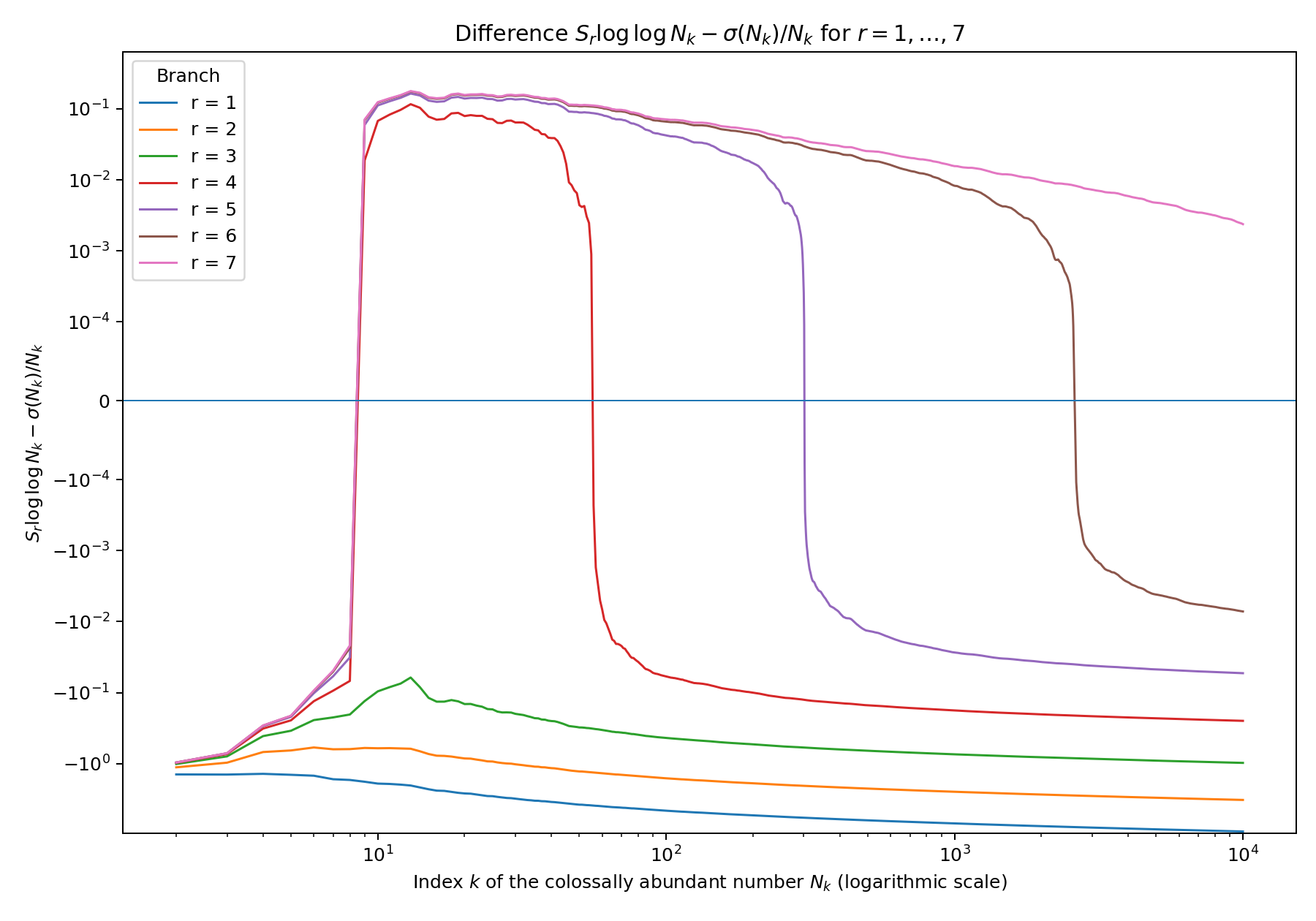}
    \caption{Realization of Espinosa's branches through subsets of divisors.}
    \label{fig1}
\end{figure}

  \section{Espinosa equivalence of the Riemann hypothesis}  
Robin–Lagarias–Espinosa type equivalences \cite{Ro84,Lag02,Da25} encode the Riemann Hypothesis in a remarkable way. However, as we will see in the following section, Espinosa's equivalence reveals a connection to integer partitions. The simplicity of its proof is where its beauty lies. Following the spirit of Espinosa's ideas \cite{Da25}, the proof proceeds as follows. 

Take 

\begin{align}
    \sum_{k=0}^\infty\left(\frac{H_k-\gamma}{k!}\right)x^k &=\sum_{k=1}^\infty H_k\frac{x^k}{k!}-\gamma e^x.
\end{align}
Using the integral representation $H_k=\int_{0}^1(1-t^k)/(1-t)dt$, we obtain  
\begin{align}
    \sum_{k=1}^\infty H_k\frac{x^k}{k!}&=\int_0^1\frac{\sum_{k=1}^\infty x^k/k!-\sum_{k=1}^\infty (xt)^k/k!}{1-t}dt\\
    &=\int_0^1\frac{e^x-1-(e^{xt}-1)}{1-t}dt\\
    &=\int_0^1\frac{e^x-e^{xt}}{1-t}dt \\
    &=e^x\int_0^1\frac{1-e^{-x(1-t)}}{1-t}dt.
\end{align}
Applying the change of variable $u=1-t$, so that $dt=-du$, yields
\begin{align}
     e^x\int_0^1\frac{1-e^{-x(1-t)}}{1-t}dt&=e^x\int_0^1\frac{1-e^{-xu}}{u}du,
\end{align}
and with the further substitution $t=xu$, hence $du=dt/x$, we get
\begin{align}
    e^x\int_0^1\frac{1-e^{-xu}}{u}du&=e^x\int_0^x\frac{1-e^{-t}}{t}dt.
\end{align}
According to \cite[5.1.39]{AS66}, the following identity holds:
\begin{equation}
    \int_0^x\frac{1-e^{-t}}{t}dt=\log x+\gamma+E_1(x)
\end{equation}
where $E_1(x)=\int_x^\infty e^{-t}/t\,dt$ denotes the exponential integral. 
We finally obtain
\begin{align}
    \sum_{k=0}^\infty\left(\frac{H_k-\gamma}{k!}\right)x^k &=\sum_{k=1}^\infty H_k\frac{x^k}{k!}-\gamma e^x\\
    &=e^x(\log x+\gamma+E_1(x))-\gamma e^x\\
    &=e^x(\log x+E_1(x))\,.\label{eq1}
\end{align}
By Lagarias \cite[Lem. 3.1]{Lag02}, for $n\geq 3$, we have
\begin{equation}\label{lag0}e^\gamma n\log\log n\leq e^{H_n}\log H_n\,.\end{equation}

Assuming the Riemann hypothesis, by Robin's criterion \cite{Ro84} implies that for every $n > 5040$, we have the inequality
\begin{equation}
\sigma(n)=\sum_{d|n}d\leq e^\gamma n\log\log n\leq  e^{H_n}(\log H_n+E_1(H_n))\leq A(n)   
\end{equation}For the remaining values $1\leq n\leq 5040$, a direct computer verification confirms that $\sigma(n)\leq A(n)$ holds as well.

Now, suppose that the inequality $\sigma(n) \leq A(n)$ holds for all $n$. 
From our previous discussion we have $A(n)=e^{H_n}\bigl(\log H_n+E_1(H_n)\bigr)$.

If the Riemann hypothesis were false, the result of Robin \cite{Ro84} guarantees the existence of constants $0 < \beta < \frac{1}{2}$ and $C > 0$ such that
\begin{equation}
    e^\gamma n\log\log n + \frac{C n\log\log n}{(\log n)^\beta} \leq \sigma(n)
\end{equation}
holds for infinitely many $n$ (which can be chosen arbitrarily large). 

On the other hand, by Lagarias \cite{Lag02}, for $n \geq 3$, we have the inequality
\begin{equation}\label{lag1}
    e^{H_n}\log(H_n)\leq e^\gamma n\log\log(n)+\frac{3}{2}\frac{e^\gamma n}{\log n}\,.
\end{equation}
Consequently, we obtain  
\begin{align}
    \frac{Cn\log \log n}{(\log n)^\beta}&\leq \frac{3}{2}\frac{e^\gamma n}{\log n}+e^{H_n}E_1(H_n)\\
    &\leq \frac{3}{2}\frac{e^\gamma n}{\log n}+\frac{e^\gamma(n+1)}{\log n}\\
    &=\frac{e^\gamma}{\log n}\left(\frac{5}{2}n+1\right)
\end{align}
where we have used $e^{H_n}E_1(H_n)\leq e^{H_n}/H_n\leq e^\gamma(n+1)/\log n$, which follows from the well-known bounds $\log n < H_n < \log(n+1)+\gamma$ for $n\geq 1$.

From this we deduce the inequality  
\begin{equation}
\frac{C}{e^\gamma}\log\log(n)\leq (\log(n))^{\beta-1}\left(\frac{5}{2}+\frac{1}{n}\right)\,.
\end{equation}
Taking the limit as $n\to\infty$, the left-hand side tends to infinity, whereas the right-hand side tends to zero, a contradiction.

We may choose coefficients $a_k$ satisfying $H_k-\gamma \leq a_k \leq \log k + 1/k$ providing the following inequality is satisfied(see de Appendix):
\begin{equation}\label{apen1}
    A(n)\leq e^{H_n}\log H_n+\frac{d\cdot e^\gamma (n+1)}{\log n}\,.
\end{equation}
for any constant $d$. Indeed, by reproducing the previous calculations we arrive at the inequality
\begin{equation}
\frac{C}{e^\gamma}\log\log(n)\leq (\log(n))^{\beta-1}\left(\frac{3+2d}{2}+\frac{d}{n}\right)\,.
\end{equation}
This flexibility is particularly useful, as it allows us to select different coefficients within the specified range. For instance, in the following section we will take $a_k = \log(k+1)$.

\begin{prop}
We have the following asymptotic limit:
\begin{equation}
    \lim_{n\rightarrow \infty}\frac{A(n)}{n\log\log n}=e^\gamma.
\end{equation}
\end{prop}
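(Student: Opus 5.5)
The plan is a sandwich argument. I will squeeze $A(n)/(n\log\log n)$ between two quantities that both tend to $e^\gamma$. Throughout, $A(n)$ denotes $\sum_k a_k H_n^k/k!$ for any admissible choice of coefficients $H_k-\gamma\le a_k\le \log k+1/k$, including $a_k=\log(k+1)$.

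\textbf{Lower bound.} Since $a_k\ge H_k-\gamma$ and $H_n>0$, termwise comparison together with identity (\ref{eq1}) gives
\begin{equation*}
A(n)\ \ge\ \sum_{k=0}^\infty \frac{H_k-\gamma}{k!}H_n^k \ =\ e^{H_n}\bigl(\log H_n+E_1(H_n)\bigr)\ \ge\ e^{H_n}\log H_n .
\end{equation*}
Lagarias' inequality (\ref{lag0}) then yields $A(n)\ge e^\gamma n\log\log n$ for $n\ge 3$. Hence $\liminf_{n\to\infty} A(n)/(n\log\log n)\ge e^\gamma$.

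\textbf{Upper bound.} I would invoke the appendix estimate (\ref{apen1}), $A(n)\le e^{H_n}\log H_n+d\,e^\gamma(n+1)/\log n$, and combine it with Lagarias' upper bound (\ref{lag1}). This gives
\begin{equation*}
\frac{A(n)}{n\log\log n}\ \le\ e^\gamma+\frac{3e^\gamma}{2\log n\,\log\log n}+\frac{d\,e^\gamma(n+1)}{n\log n\,\log\log n}.
\end{equation*}
Both correction terms tend to $0$, so $\limsup_{n\to\infty} A(n)/(n\log\log n)\le e^\gamma$, and the limit equals $e^\gamma$.

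\textbf{Main obstacle.} The delicate point is the upper bound (\ref{apen1}), which is deferred to the appendix. If I had to prove it directly for $a_k=\log(k+1)$, I would write $\log(k+1)=H_k-\gamma+\varepsilon_k$, where $\varepsilon_k=\log(k+1)-H_k+\gamma$. The standard expansion $H_k=\log k+\gamma+1/(2k)+O(k^{-2})$ gives $\varepsilon_k=\frac{1}{2k}+O(k^{-2})$, so $0\le\varepsilon_k\le C/(k+1)$ for some constant $C$ and all $k$.

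Then
\begin{equation*}
\sum_k \varepsilon_k\frac{x^k}{k!}\ \le\ C\sum_k\frac{x^k}{(k+1)!}\ =\ C\,\frac{e^x-1}{x}.
\end{equation*}
Evaluating at $x=H_n$ bounds the excess by $Ce^{H_n}/H_n\le C e^\gamma(n+1)/\log n$, using $\log n<H_n<\log(n+1)+\gamma$. Likewise, $e^{H_n}E_1(H_n)\le e^{H_n}/H_n$ is of the same order.

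Adding these to $e^{H_n}\log H_n$ recovers an inequality of the form (\ref{apen1}), with the constant $d$ depending on the chosen coefficients. That is all the sandwich argument needs.
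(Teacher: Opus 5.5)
Your proof is correct and uses the same sandwich as the paper. The upper bound comes from \eqref{apen1} together with Lagarias' \eqref{lag1}, and the lower bound from \eqref{eq1}, $E_1(H_n)\ge 0$ and \eqref{lag0}. Your termwise comparison $a_k\ge H_k-\gamma$ is in fact cleaner than the paper's step of switching to the coefficients $a_k=H_k-\gamma$ for the lower half, since both bounds then apply to one fixed $A(n)$ such as $a_k=\log(k+1)$. For that choice, your $\varepsilon_k$ argument gives a self-contained replacement for the appendix's $\operatorname{Ei}$-based proof of \eqref{apen1}.
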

\begin{proof}
    We use the two results of Lagarias,  namely \eqref{lag0} and \eqref{lag1}. 
     From \eqref{apen1} we obtain, for $n\geq 3$,
    \begin{equation}
        \frac{A(n)}{n\log \log n}\leq e^\gamma+\frac{e^\gamma}{\log n\log\log n}\left(\frac{3}{2}+d+\frac{d}{n}\right)
    \end{equation}
    Taking the limit as $n\to\infty$ yields
 $\lim_{n\rightarrow\infty}A(n)/n\log\log n\leq e^\gamma$. 
 
 To establish the reverse inequality, we now choose the coefficients $a_k=H_k-\gamma$ in the definition of $A(n)$. With this choice we have the identity $A(n)=e^{H_n}\bigl(\log H_n+E_1(H_n)\bigr)$, and therefore, for $n\geq 3$, 
    \begin{equation}
        \frac{A(n)}{n\log\log n}=\frac{e^{H_n}\log H_n+e^{H_n}E_1(H_n)}{n\log\log n}\geq e^\gamma+\frac{e^{H_n}E_1(H_n)}{n\log\log n}\,.
    \end{equation}
    Finally, $\lim_{n\to\infty}e^{H_n}E_1(H_n)/n\log\log n=0$, since the well-known asymptotic expansion 
    $e^{H_n}E_1(H_n)=\frac{1}{H_n}+\frac{1}{H_n}O\bigl(\frac{1}{|H_n|}\bigr)$.
\end{proof}

\section{Espinosa's branches and hook-shaped partitions}

 In this section our approach is purely algebraic: we work with monomial symmetric polynomials, and the whole computational engine relies on Newton's identities for symmetric polynomials. 

Following the discussion in the previous section, we now specialize the sum $A(n)$ by choosing the coefficients $a_k = \log(k+1)$. That is, we consider the expression 
\begin{equation}
    A(n):=\sum_{k=0}^\infty \log(k+1)\frac{(H_n)^k}{k!}\,.
\end{equation}
Our goal is to understand the asymptotic behavior of this sum. There is a natural decomposition in terms of integer partitions traced back to the Espinosa sums introduced in \cite{Da26}. For a partition $\mu$ of $j$ with $\ell(\mu)=i$ parts, let $m_\mu(x_1,\ldots,x_n)$ denote the corresponding monomial symmetric polynomial. Then we set
\begin{equation}
    E_{i,j}(n) :=  \sum_{\mu \vdash j,\, \ell(\mu) = i} \frac{m_\mu(x_1, \dots, x_n)}{\mu_1! \mu_2! \dots \mu_i!}\,.
\end{equation}
Building on these, we introduce the \(r\)-sums, which we call the \(r\)-th Espinosa branch:
\begin{equation}
   A_r(n)=\sum_{i=1}^n\log(i+r)E_{i,i+r-1} (n)\,, 
\end{equation}
and by evaluating the symmetric polynomials at $x_i = 1/i$ for $i=1,\ldots,n$, we recover the original sum as $A(n) = \sum_{r=1}^{\infty} A_r(n)$.

The case $r=1$ is particularly well understood and corresponds to a classical logarithmic combinatorial structure \cite{ABT00}. In this case, the sum reduces to elementary symmetric polynomials:
\begin{equation} A_1(n)=\sum_{m=1}^n\log(m+1)e_m(n)\,,\end{equation} 
where $e_m(n) := e_m(1, 1/2, \ldots, 1/n)$. These polynomials admit an explicit expression in terms of Stirling numbers: \begin{equation}e_m(n):=e_m\left(1, \frac{1}{2}, \ldots, \frac{1}{n}\right) = \genfrac{[}{]}{0pt}{}{n+1}{m+1}/n!\,,\end{equation} 
with $\genfrac{[}{]}{0pt}{}{n+1}{m+1}$ denoting the unsigned Stirling numbers of the first kind. Moreover, they have a probabilistic interpretation \cite{ABT00}: if $C_n$ denotes the number of cycles in a uniformly random permutation of $n$ elements, then
\begin{equation} e_m(n)=(n+1)\mathbb{P}(C_{n+1}=m+1)\,,\end{equation}
where $\mathbb{P}(C_{n+1}=m+1)$ denotes the probability that a permutation of $n+1$ elements, chosen uniformly at random, has exactly $m+1$ cycles; see \cite{ABT00}. Substituting this interpretation into $A_1(n)$ yields
    \begin{equation}\label{equaprimera-segunda}
A_1(n) = (n+1) \sum_{k=1}^{n+1} \log k \cdot \mathbb{P}(C_{n+1}=k) = (n+1) \cdot \mathbb{E}[\log C_{n+1}]\,.        
    \end{equation}

For $r \geq 2$, the situation is more delicate. Finding an explicit closed form for the full sums $A_r(n)$ is a challenging task, as they involve all integer partitions. To make progress, we isolate a distinguished sub-summand that captures the contribution of the simplest family of partitions: those consisting of a single part of size $r$ together with any number of parts of size $1$. More precisely, we consider partitions of the form $[r, 1^l]$, i.e., one part equal to $r$ and $l$ parts equal to $1$, and define
\begin{equation}
\tilde{A}_{r}(n) = \frac{1}{r!} \sum_{l=0}^{n-1} \log(l+r+1) \, m_{[r,1^l]}(x_1,x_2,\ldots,x_n)\,.
\end{equation}
Here $m_{[r,1^l]}$ denotes the monomial symmetric polynomial associated to the partition $[r, 1^l]$. For $r=1$ and $r=2$, these sub-sums actually account for the entirety of $A_1(n)$ and $A_2(n)$. For $r \geq 3$, however, the situation becomes more delicate: additional families of partitions begin to appear. For instance, when $r=3$ one must consider partitions of type $[2,2]$; for $r=4$, those associated with $[3,2]$ and $[2,2,2]$, and the pattern continues for larger $r$.

To analyze these polynomials, we will need two classical families of symmetric functions: the elementary symmetric polynomials $e_k$ and the power sums $p_k$, defined respectively by

\begin{equation}
e_k = \sum_{1 \leq i_1 < \cdots < i_k \leq n} x_{i_1} \cdots x_{i_k}, \qquad
p_k = \sum_{i=1}^n x_i^k.
\end{equation}

A fundamental tool in our work is a determinantal formula due to Macdonald \cite[pg. 28]{Mac95} that relates these two families:
\begin{equation}\label{MacDonald}
p_k = 
\begin{vmatrix}
e_1 & 1 & 0 & 0 & \cdots & 0 \\
2e_2 & e_1 & 1 & 0 & \cdots & 0 \\
3e_3 & e_2 & e_1 & 1 & \cdots & 0 \\
\vdots & \vdots & \vdots & \vdots & \ddots & \vdots \\
ke_k & e_{k-1} & e_{k-2} & \cdots & e_2 & e_1
\end{vmatrix}\,.\end{equation}
The following result extends this formula to the monomial symmetric polynomials $m_{[r,1^l]}$, which will allow us to obtain a tractable expression for $\tilde{A}_r(n)$.
\begin{prop}\label{prop:determinant-m} For $r\geq 2$ and $l\geq 0$, we have the following determinantal formula:
\begin{equation}
    m_{[r,1^l]}=\begin{vmatrix}
e_1 & 1 & 0 & 0 & \cdots & 0 \\
2e_2 & e_1 & 1 & 0 & \cdots & 0 \\
3e_3 & e_2 & e_1 & 1 & \cdots & 0 \\
\vdots & \vdots & \vdots & \vdots & \ddots & \vdots \\
(r-1)e_{r-1} & e_{r-2} & e_{r-3} & \cdots & e_1 & 1 \\
(l+r)e_{l+r} & e_{l+r-1} & e_{l+r-2} & \cdots & e_{l+2} & e_{l+1}
\end{vmatrix}\,.
\end{equation}
\end{prop}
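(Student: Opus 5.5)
The plan is to expand the determinant along its last row and compare the resulting cofactors with those in Macdonald's formula \eqref{MacDonald}. The goal is a Newton-type expression for the determinant, which should then telescope to $m_{[r,1^l]}$ by a simple product rule for $p_j e_k$. Write $D_{r,l}$ for the $r\times r$ determinant in the statement. Its first $r-1$ rows coincide with those of the matrix for $p_r$ in \eqref{MacDonald}; only the last row is replaced by
$$\bigl((l+r)e_{l+r},\, e_{l+r-1},\,\ldots,\,e_{l+1}\bigr).$$
Since the determinant is linear in the last row, it suffices to compute the cofactors of the last row.

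\textbf{Step 1 (cofactors).} Delete the last row and column $c$. Because the matrix is lower Hessenberg with $1$'s on the superdiagonal, the resulting minor is block lower triangular:
\begin{itemize}
\item its top-left $(c-1)\times(c-1)$ block is exactly Macdonald's matrix for $p_{c-1}$, so by \eqref{MacDonald} its determinant is $p_{c-1}$;
\item its bottom-right block is upper unitriangular, with determinant $1$;
\item for $c=1$ the minor is just unitriangular, with determinant $1$.
\end{itemize}
Hence the cofactor of column $c$ is $(-1)^{r+c}p_{c-1}$, with the convention $p_0:=1$ for the $c=1$ entry. Writing $j=c-1$, this gives
$$D_{r,l}=(-1)^{r-1}(l+r)e_{l+r}+\sum_{j=1}^{r-1}(-1)^{r-1-j}\,p_j\,e_{l+r-j}.$$
As a sanity check, at $l=0$ the last row coincides with that of \eqref{MacDonald} for $k=r$, so this reduces to Newton's identity and gives $D_{r,0}=p_r=m_{[r]}$.

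\textbf{Step 2 (product rule).} Multiply a power sum by an elementary symmetric polynomial and sort monomials by whether the variable carrying the power $x_i^j$ also appears in the squarefree part. For $j\ge 2$ and $k\ge1$ this gives
$$p_je_k=m_{[j,1^k]}+m_{[j+1,1^{k-1}]},$$
where $m_{[j+1,1^{k-1}]}$ arises from monomials in which $x_i$ appears in both factors. For $j=1$, the coincidence case again gives $m_{[2,1^{k-1}]}$. The non-coincidence case gives squarefree monomials of degree $k+1$, each produced $k+1$ times, so
$$p_1e_k=(k+1)e_{k+1}+m_{[2,1^{k-1}]}.$$
These are identities of polynomials in $x_1,\dots,x_n$, valid for all $n$; terms with $k>n$ or with too many parts simply vanish, so no case split on $l$ versus $n$ is needed.

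\textbf{Step 3 (telescoping).} Substitute the product rule into the sum with $k=l+r-j$:
\begin{itemize}
\item the $j=r-1$ term (sign $+$) contributes $m_{[r,1^l]}+m_{[r-1,1^{l+1}]}$;
\item each term $j$ with $2\le j\le r-2$ contributes $m_{[j,1^{l+r-j}]}$ and $m_{[j+1,1^{l+r-j-1}]}$ with alternating signs, so consecutive pairs cancel down to $j=1$;
\item the $j=1$ term, with sign $(-1)^{r-2}$, leaves $(-1)^{r}(l+r)e_{l+r}$, which cancels the first term $(-1)^{r-1}(l+r)e_{l+r}$ of Step 1.
\end{itemize}
What remains is exactly $m_{[r,1^l]}$.

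I expect the main obstacle to be bookkeeping rather than ideas. One must check the signs $(-1)^{r+c}$ against the alternation in Step 3, and verify that the block-triangular structure of the minors genuinely reproduces \eqref{MacDonald} for each $p_{c-1}$. One must also treat the exceptional coefficient $k+1$ in the case $j=1$ correctly, since it is precisely what absorbs the weighted entry $(l+r)e_{l+r}$.
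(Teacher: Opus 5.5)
Your proof is correct, and it takes a different route from the paper's.

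\textbf{The paper's route.} The paper isolates the single recursion $m_{[r,1^{l+1}]}=p_re_{l+1}-m_{[r+1,1^l]}$. This is your Step 2 identity with $j=r\ge 2$, solved for the other term. It then argues by induction on $l$, simultaneously for all $r\ge 2$, with Macdonald's formula for $p_r$ as the base case $l=0$. The paper leaves implicit why the determinant obeys the same recursion. The reason is that $D_{r+1,l}$, expanded along its last column, has only two nonzero entries there: $1$ in row $r$ and $e_{l+1}$ in row $r+1$. This gives $D_{r+1,l}=p_re_{l+1}-D_{r,l+1}$.

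\textbf{Your route.} You expand once along the last row and identify every cofactor as $(-1)^{r+c}p_{c-1}$. You then telescope the same product rule in the opposite direction, lowering the large part down to $1$. There the exceptional identity $p_1e_k=(k+1)e_{k+1}+m_{[2,1^{k-1}]}$ absorbs the weighted entry $(l+r)e_{l+r}$.

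\textbf{What each buys.} The paper's induction needs the product rule only for exponents $\ge 2$, and Macdonald's formula only at the base. Your argument is non-inductive and gives the explicit Newton-type expansion $D_{r,l}=(-1)^{r-1}(l+r)e_{l+r}+\sum_{j=1}^{r-1}(-1)^{r-1-j}p_je_{l+r-j}$. Your Step 1 is exactly the cofactor computation behind the paper's later passage from Proposition~\ref{prop:determinant-tildeA} to Corollary~\ref{cor:tildeA-explicit}, which the paper does not spell out.

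\textbf{Two cosmetic points.}
\begin{itemize}
\item The lower-right block of your minor has entries $e_{i-c'+1}$, nonzero only for $c'\le i+1$. It is therefore lower, not upper, unitriangular. Its determinant is still $1$.
\item For $r=2$ your first bullet must use the $j=1$ rule, since then $j=r-1=1$. The cancellation still goes through unchanged.
\end{itemize}
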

\begin{proof}The case $l=0$ is exactly the determinantal formula \eqref{MacDonald}. For the general case, consider the product
\begin{equation}
p_r \cdot e_{l+1} = \left( \sum_{j=1}^{n} x_j^r \right) \left( \sum_{1 \leq i_1 < i_2 < \cdots < i_{l+1} \leq n} x_{i_1} x_{i_2} \cdots x_{i_{l+1}} \right)    
\end{equation}
Expanding the product, we separate the terms according to whether the index $j$ appears among the $i_k$ or not:
\begin{equation}
p_r \cdot e_{l+1} = 
\sum_{\substack{1 \leq j \leq n \\ 1 \leq i_1 < \cdots < i_{l+1} \leq n \\ j = i_k \text{ for some } k}} 
x_j^r \cdot x_{i_1} \cdots x_{i_{l+1}}
\;+\;
\sum_{\substack{1 \leq j \leq n \\ 1 \leq i_1 < \cdots < i_{l+1} \leq n \\ j \neq i_k \text{ for all } k}} 
x_j^r \cdot x_{i_1} \cdots x_{i_{l+1}}.
\end{equation}
The first sum collects the terms where $j$ coincides with one of the indices $i_k$. These terms correspond to the monomial symmetric polynomial associated with the partition $[r+1, 1^l]$, namely $m_{[r+1, 1^l]}$. The second sum collects the terms where $j$ is different from all $i_k$. This corresponds to the monomial symmetric polynomial associated with the partition $[r, 1^{l+1}]$, namely $m_{[r, 1^{l+1}]}$. Hence, we obtain the relation \begin{equation}m_{[r, 1^{l+1}]} = p_r \cdot e_{l+1} - m_{[r+1, 1^l]}\,.\end{equation}
Using this recursion together with the determinantal formula for $p_r$, the result follows by simultaneous induction on $l$, valid for all $r\geq 2$.
\end{proof}

From Proposition \ref{prop:determinant-m}, we obtain a determinantal expression for $\tilde{A}_r(n)$.

\begin{prop}\label{prop:determinant-tildeA}
For $r\geq 2$, we have
\begin{equation}
\tilde{A}_r(n) = \frac{1}{r!} \det 
\begin{pmatrix}
e_1(n) & 1 & 0 & \cdots & 0 \\
2e_2(n) & e_1(n) & 1 & \cdots & 0 \\
\vdots & \vdots & \vdots & \ddots & \vdots \\
(r-1)e_{r-1}(n) & e_{r-2}(n) & \cdots & e_1 & 1 \\
T_1^{(r)} & T_2^{(r)} & \cdots & T_{r-1}^{(r)} & T_r^{(r)}
\end{pmatrix},
\end{equation}
where
\begin{equation}
T_1^{(r)} = \sum_{m=1}^{n} \log(m+r) \cdot (m+r-1) e_{m+r-1}(n),
\end{equation}
and for $j=2,\ldots,r$,
\begin{equation}
T_j^{(r)} = \sum_{m=1}^{n} \log(m+r) \cdot e_{m+r-j}(n).
\end{equation}
\end{prop}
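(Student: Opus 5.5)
The plan is to combine Proposition \ref{prop:determinant-m} with multilinearity of the determinant in its last row. In the determinant of Proposition \ref{prop:determinant-m} for $m_{[r,1^l]}$, the first $r-1$ rows do not depend on $l$. Only the last row does, and it equals
\[
\bigl((l+r)e_{l+r},\, e_{l+r-1},\, \ldots,\, e_{l+1}\bigr).
\]
Its $j$-th entry is $e_{l+r+1-j}$ for $j\geq 2$, and $(l+r)e_{l+r}$ for $j=1$.

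Since the determinant is linear in the last row, any linear combination $\sum_l c_l\, m_{[r,1^l]}$ equals the determinant with the same first $r-1$ rows and last row $\sum_l c_l\,(\text{last row of } m_{[r,1^l]})$. Take $c_l=\log(l+r+1)$ and evaluate at $x_i=1/i$. Multiplying by $1/r!$ then gives $\tilde{A}_r(n)$ as $\frac{1}{r!}$ times a single determinant with last row
\[
\Bigl(\sum_{l=0}^{n-1}\log(l+r+1)(l+r)e_{l+r}(n),\; \sum_{l}\log(l+r+1)e_{l+r-1}(n),\;\ldots\Bigr).
\]

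Next, reindex with $m=l+1$, so that $m$ runs from $1$ to $n$. Then $\log(l+r+1)=\log(m+r)$, and $(l+r)e_{l+r}=(m+r-1)e_{m+r-1}$, which gives $T_1^{(r)}$. For the column $j\ge 2$, the entry $e_{l+r+1-j}$ becomes $e_{m+r-j}$, which gives $T_j^{(r)}$. The $r\times r$ shape matches: Proposition \ref{prop:determinant-m} has $r-1$ rows ending in the row $((r-1)e_{r-1},\ldots,e_1,1)$, followed by the $l$-dependent row.

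One bookkeeping point needs care. In $T_j^{(r)}$, terms with index $m+r-j>n$ vanish, because $e_k(n)=0$ for $k>n$. Conversely, for $l\ge n$ the polynomial $m_{[r,1^l]}$ in $n$ variables is zero. So the range $l=0,\ldots,n-1$ in $\tilde{A}_r(n)$ versus $m=1,\ldots,n$ in $T_j$ is consistent. There is a subtlety here: the determinantal identity of Proposition \ref{prop:determinant-m} holds as a polynomial identity in $n$ variables, including when $l+1>n-1$, where both sides vanish or reduce correctly. I would verify this by noting the identity holds in the ring of symmetric functions and specializing.

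The main (mild) obstacle is exactly this range bookkeeping. Everything else is linearity of the determinant in one row.
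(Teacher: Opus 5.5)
Your proposal is correct and is the argument the paper leaves implicit (it only says the formula follows from Proposition \ref{prop:determinant-m}): the determinant is linear in its last row, you weight the rows by $\log(l+r+1)$, and you shift the index by $m=l+1$. The range bookkeeping you flag is harmless, since $l=0,\dots,n-1$ maps bijectively onto $m=1,\dots,n$, and the identity of Proposition \ref{prop:determinant-m} specializes validly to $n$ variables for every $l$.
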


Recall that the elementary symmetric polynomials evaluated at $x_i = 1/i$ admit the probabilistic interpretation
\begin{equation}
e_m(n) = (n+1)\,\mathbb{P}(C_{n+1}=m+1),
\end{equation}
where $C_n$ denotes the number of cycles in a uniformly random permutation of $n$ elements.

Let $C := C_{n+1}$ and define
\begin{equation}
E_1 := \mathbb{E}[(C-1)\log C], \qquad 
E_j := \mathbb{E}[\log(C+j-1)] \;\; (j\geq 2).
\end{equation}

Using these definitions, we can rewrite the sums $T_j^{(r)}$ as follows:
\begin{align}
T_1^{(r)} &= (n+1)E_1 - \sum_{t=1}^{r} \log t \cdot (t-1) \, e_{t-1}(n),\\
T_j^{(r)} &= (n+1)E_j - \sum_{t=1}^{r-j+1} \log(t+j-1) \, e_{t-1}(n), \quad j\geq 2.
\end{align}

A key observation is that the matrix obtained by replacing the last row of the determinant in Proposition \ref{prop:determinant-tildeA} with the correction terms $\Delta_j$ has zero determinant, where
\begin{equation}
\Delta_1 := -\sum_{t=1}^{r} \log t \cdot (t-1) e_{t-1}, \qquad 
\Delta_j := -\sum_{t=1}^{r-j+1} \log(t+j-1) e_{t-1} \;\; (j\geq 2).
\end{equation}
More precisely, we have the following:
\begin{equation}
\operatorname{det}
\begin{pmatrix}
e_1 & 1 & 0 & 0 & \cdots & 0 \\
2e_2 & e_1 & 1 & 0 & \cdots & 0 \\
3e_3 & e_2 & e_1 & 1 & \cdots & 0 \\
\vdots & \vdots & \vdots & \vdots & \ddots & \vdots \\
(r-1)e_{r-1} & e_{r-2} & e_{r-3} & \cdots & e_1 & 1 \\
\Delta_1 & \Delta_2 & \Delta_3 & \cdots & \Delta_{r-1} & \Delta_r
\end{pmatrix}=0\,.
\end{equation}

This leads to the following simplified expression.

\begin{cor}\label{cor:tildeA-explicit}
For $r\geq 2$, we have
\begin{equation}
\tilde{A}_r(n) = \frac{n+1}{r!} \sum_{j=1}^r (-1)^{r+j} p_{j-1}(n) \, E_j,
\end{equation}
where $p_0(n) := 1$ and $p_k(n)$ denotes the power sum symmetric polynomial evaluated at $x_i = 1/i$.
\end{cor}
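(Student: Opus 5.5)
The plan is to start from the determinant of Proposition \ref{prop:determinant-tildeA}, which is linear in its last row. We substitute the rewritten entries
\[
T_1^{(r)}=(n+1)E_1+\Delta_1,\qquad T_j^{(r)}=(n+1)E_j+\Delta_j\quad (j\geq 2),
\]
and split the determinant by multilinearity into two pieces. The first piece has last row $(n+1)(E_1,\ldots,E_r)$. The second has last row $(\Delta_1,\ldots,\Delta_r)$, and it vanishes by the key observation stated just before the corollary. So
\[
\tilde A_r(n)=\frac{n+1}{r!}\det M,
\]
where $M$ is the lower Hessenberg matrix whose first $r-1$ rows are the rows $(k e_k, e_{k-1},\ldots,e_1,1,0,\ldots)$ and whose last row is $(E_1,\ldots,E_r)$.

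Next, expand $\det M$ along the last row. The cofactor of the entry $E_j$ is $(-1)^{r+j}$ times the $(r-1)\times(r-1)$ minor $N_j$ obtained by deleting row $r$ and column $j$. We show that $N_j=p_{j-1}$. The deleted column splits the matrix into two blocks.
\begin{itemize}
\item Columns $j+1,\ldots,r$ restricted to rows $j,\ldots,r-1$ form an upper unitriangular block, because those columns carry the superdiagonal $1$'s.
\item Columns $1,\ldots,j-1$ restricted to rows $1,\ldots,j-1$ form exactly the Macdonald matrix \eqref{MacDonald} of size $j-1$.
\end{itemize}
Since the entries above the superdiagonal are zero, the minor is block lower triangular, so $N_j=p_{j-1}\cdot 1$. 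For $j=1$ the minor is itself unitriangular, giving $N_1=1=p_0$.

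Collecting terms gives $\det M=\sum_{j=1}^r(-1)^{r+j}p_{j-1}E_j$. Evaluating at $x_i=1/i$ then yields the stated formula.

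The main obstacle is the vanishing of the $\Delta$-determinant. If we do not take it as given, it must be proved. The cleanest route is to expand that determinant along its last row in the same way, which reduces the claim to
\[
\sum_j(-1)^{r+j}p_{j-1}\Delta_j=0.
\]
Next, insert the definitions of the $\Delta_j$ and regroup the sum by the value of $\log s$ with $s=t+j-1$. The coefficient of each $\log s$ then becomes
\[
(s-1)e_{s-1}-\sum_{j\ge 2}(-1)^{j}p_{j-1}e_{s-j}
\]
up to a global sign. This expression vanishes by Newton's identity $k e_k=\sum_{i=1}^k(-1)^{i-1}e_{k-i}p_i$. The care needed lies in checking the index ranges (in particular $s\leq r$ and the boundary $t=1$ terms) and the sign conventions.
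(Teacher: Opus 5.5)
Your proposal is correct and follows the paper's route: write the last row of the determinant in Proposition~\ref{prop:determinant-tildeA} as $(n+1)E_j+\Delta_j$, drop the vanishing $\Delta$-determinant, and expand along the last row so that each cofactor is the Macdonald determinant for $p_{j-1}$. Your Newton-identity proof that the $\Delta$-determinant vanishes is sound and fills a step the paper only asserts. There is one cosmetic slip: the block in rows $j,\ldots,r-1$ and columns $j+1,\ldots,r$ is lower (not upper) unitriangular, which still gives determinant $1$.
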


To find expressions for $E_j$, $1\leq j\leq r$, we require the following asymptotic expressions: 

\begin{prop}\label{cor:E-asymp}
For $C = C_{n+1}$, for every fixed $a\geq 0$, we have the following
\begin{equation}
\mathbb{E}[\log(C+a)] = \log(H_{n+1}+a) + O\left(\frac{1}{\log n}\right),
\end{equation}
and
\begin{equation}
\mathbb{E}[C\log C] = H_{n+1}\log H_{n+1} + \frac{\kappa_2}{2H_{n+1}} + O\left(\frac{1}{\log n}\right),
\end{equation}
where $\kappa_2 = H_{n+1} - p_2(n+1)$ is the second cumulant.
\end{prop}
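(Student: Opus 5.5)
Both estimates come from a second-order Taylor expansion (delta method) of the relevant function of $C=C_{n+1}$ around its mean. The moments are explicit, because $C$ is a sum of independent Bernoulli variables. For a uniform permutation of $n+1$ elements we have $C=\sum_{k=1}^{n+1}B_k$ with $B_k\sim\mathrm{Bernoulli}(1/k)$ independent; this is the Feller coupling, equivalent to the generating function $\prod_{k=1}^{n+1}(1+ (t-1)/k)$, whose coefficients are the $e_m$ identity recalled above. Hence
\[
\mu:=\mathbb{E}[C]=H_{n+1},\qquad \kappa_2=\operatorname{Var}(C)=\sum_{k\le n+1}\frac1k\Bigl(1-\frac1k\Bigr)=H_{n+1}-p_2(n+1).
\]
The third and fourth central moments are also $O(H_{n+1})=O(\log n)$, since every cumulant of a sum of independent Bernoullis is bounded by a constant times $H_{n+1}$. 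In addition, Bernstein/Chernoff gives the concentration bound $\mathbb{P}(|C-\mu|>\mu/2)\le e^{-c\log n}=n^{-c}$.

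For the first estimate, I set $f(x)=\log(x+a)$ and write
\[
f(C)=f(\mu)+f'(\mu)(C-\mu)+\tfrac12 f''(\xi)(C-\mu)^2 .
\]
The linear term has mean zero. On the event $|C-\mu|\le\mu/2$, we have $|f''(\xi)|\le 4/(\mu+a)^2$, so that part contributes $O(\kappa_2/\mu^2)=O(1/\log n)$. On the complementary event we have $C\ge1$, so $|f(C)|\le\log(n+2+a)$, and that part contributes at most $O(\log n)\cdot n^{-c}$, which is negligible. This gives $\mathbb{E}[\log(C+a)]=\log(H_{n+1}+a)+O(1/\log n)$.

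For the second estimate, with $g(x)=x\log x$, I expand to third order:
\[
g(C)=g(\mu)+g'(\mu)(C-\mu)+\frac{1}{2\mu}(C-\mu)^2+\tfrac16 g'''(\xi)(C-\mu)^3 .
\]
Here $g''(x)=1/x$ and $g'''(x)=-1/x^2$. The quadratic term gives exactly $\kappa_2/(2H_{n+1})$. On the good event the cubic term is bounded by $O(\mathbb{E}|C-\mu|^3/\mu^2)$. Since $\mathbb{E}|C-\mu|^3\le(\mathbb{E}(C-\mu)^4)^{3/4}=O((\log n)^{3/2})$, this is $O((\log n)^{-1/2})$, which is weaker than the $O(1/\log n)$ claimed.

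The main obstacle is sharpening this cubic remainder. The fix I would try first is a fourth-order expansion. The cubic term then has mean $g'''(\mu)\kappa_3/6=O(\log n/\log^2 n)=O(1/\log n)$, because $\kappa_3=\sum_k\frac1k(1-\frac1k)(1-\frac2k)=O(\log n)$. The quartic remainder, with $g''''(x)=2/x^3$, is bounded on the good event by $O(\mathbb{E}(C-\mu)^4/\mu^3)=O(\log^2 n/\log^3 n)=O(1/\log n)$, using $\mathbb{E}(C-\mu)^4=3\kappa_2^2+\kappa_4$. On the bad event $|g(C)|\le (n+1)\log(n+1)$, so I need a tail bound $\mathbb{P}(|C-\mu|>\mu/2)=o(1/(n\log^2 n))$. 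The bound $n^{-c}$ from the first part is not enough here. Instead I would use the exact Poisson-type generating function $\mathbb{E}[t^C]=\Gamma(n+1+t)/(\Gamma(t)\,(n+1)!)\approx n^{t-1}/\Gamma(t)$. Applying Chernoff with $t=e^{\pm s}$ for $s$ large, the lower tail is controlled because the $\Gamma(t)$ factor blows up as $t\to 0$, giving $\mathbb{P}(C<\mu/2)\le n^{-c'}$ with $c'$ arbitrarily large. For the upper tail, $C\le n+1$ makes $g(C)\mathbf 1_{C>3\mu/2}$ only polynomially large while the probability decays faster than any fixed power of $n$. Combining these gives $\mathbb{E}[C\log C]=H_{n+1}\log H_{n+1}+\kappa_2/(2H_{n+1})+O(1/\log n)$.
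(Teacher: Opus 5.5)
Your architecture matches the paper's: the Feller coupling, a second-order Taylor expansion of $\log(x+a)$, a fourth-order expansion of $x\log x$ using $\mathbb{E}[X^3]=\kappa_3=O(\log n)$ and $\mathbb{E}[X^4]=3\kappa_2^2+\kappa_4=O(\log^2 n)$, and a Chernoff bound on an exceptional event. However, your handling of the exceptional event in the second estimate fails.

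You require $\mathbb{P}(|C-\mu|>\mu/2)=o(1/(n\log^2 n))$. To get it, you claim the lower tail is $O(n^{-c'})$ for arbitrarily large $c'$ and the upper tail decays faster than every power of $n$. Both claims are false:
\begin{itemize}
\item Lower tail: since $\mathbb{P}(C=1)=1/(n+1)$, the lower tail is at least $1/(n+1)$. In your Chernoff bound with $t\to0$, the factor $t^{-\mu/2}\approx n^{\frac{1}{2}\log(1/t)}$ swamps the gain $1/\Gamma(t)\sim t$. The optimal choice $t=1/2$ gives only about $n^{-(1-\log 2)/2}\approx n^{-0.15}$.
\item Upper tail: the optimum is $t=3/2$, giving about $n^{-(\frac{3}{2}\log\frac{3}{2}-\frac{1}{2})}\approx n^{-0.11}$. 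This is essentially sharp, since $C$ behaves like a Poisson variable of mean $\log n$, whose probability of exceeding $x\log n$ is $n^{-(x\log x-x+1)+o(1)}$.
\end{itemize}
Hence $(n+1)\log(n+1)\,\mathbb{P}(C>3\mu/2)$ is of size roughly $n^{0.89}$, not $O(1/\log n)$. Moreover, on the exceptional event the remainder is not just $g(C)$. It also contains $g'(\mu)X$, $X^2/(2\mu)$ and $X^3/(6\mu^2)$, whose worst-case values there are polynomial in $n$, so crude pointwise bounds fail for those terms too.

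The missing observation is that the upper tail never needs to be excluded. Take, as the paper does, the one-sided event $G=\{C\ge\mu/2\}$.
\begin{itemize}
\item On $G$, the intermediate point $\xi$ lies between $C$ and $\mu$, both at least $\mu/2$. So $|R_4|=X^4/(12\xi^3)\le 2X^4/(3\mu^3)$ however large $C$ is, and $\mathbb{E}[|R_4|\mathbf{1}_G]=O(\mathbb{E}[X^4]/\mu^3)=O(1/\log n)$.
\item On $G^c$, one has $1\le C<\mu/2$ and $|X|\le\mu$. Every term of the remainder is then $O(\mu\log\mu)$, and the modest bound $\mathbb{P}(G^c)\le e^{-\mu/8}$ suffices.
\end{itemize}
Alternatively, you can keep your two-sided event and use Cauchy--Schwarz, $\mathbb{E}[|R_4|\mathbf{1}_{\mathrm{bad}}]\le(\mathbb{E}[R_4^2])^{1/2}\,\mathbb{P}(\mathrm{bad})^{1/2}$. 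This works because $\mathbb{E}[R_4^2]$ is polynomial in $\log n$ while $\mathbb{P}(\mathrm{bad})$ is at most a negative power of $n$.

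The same remedies patch a smaller omission in your first estimate. There you bounded only $|f(C)|$ on the exceptional event, but the remainder also contains $f'(\mu)X$, which can be of size $n/\log n$ on the upper tail.

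Finally, early on you state that the fourth central moment is $O(\log n)$. It is in fact $3\kappa_2^2+\kappa_4\asymp\log^2 n$, which is the value you correctly use later.
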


\begin{proof}
    The central moments $\mu_m := \mathbb{E}[(C-H_{n+1})^m]$ are expressed in terms of the cumulants $\kappa_i$ of the number of cycles $C_{n+1}$:
\begin{equation}\label{ewo1}
\begin{array}{c}
\kappa_1 = H_{n+1} = p_1(n+1),\\
\kappa_2 = p_1(n+1) - p_2(n+1),\\
\kappa_3 = p_1(n+1) - 3p_2(n+1) + 2p_3(n+1),\\
\vdots\\
\kappa_m = \displaystyle\sum_{\ell=1}^m (-1)^{\ell-1} (\ell-1)! \, S(m,\ell) \, p_\ell(n+1),
\end{array}
\end{equation}
where $S(m,\ell)$ denotes the Stirling numbers of the second kind. Additionally, the central moments are given via the complete Bell polynomials:
\begin{equation}
\mu_m = B_m(0,\kappa_2,\ldots,\kappa_m).
\end{equation}
The first few central moments are
\begin{equation}\label{ewo2}
\mu_1=0,\quad \mu_2=\kappa_2,\quad \mu_3=\kappa_3,\quad \mu_4=\kappa_4+3\kappa_2^2,\quad \mu_5=\kappa_5+10\kappa_3\kappa_2,\quad \mu_6=\kappa_6+15\kappa_4\kappa_2+10\kappa_3^2+15\kappa_2^3,\ldots
\end{equation}

The number of cycles can be written as a sum of independent Bernoulli variables $C = B_1 + \cdots + B_{n+1}$, with $\mathbb{P}(B_k = 1) = 1/k$ for $1 \leq k \leq n+1$.

To avoid logarithmic series and the use of expectations of infinite sums, we focus on the event $G=\{C \geq \mu/2\}$, where $\mu = H_{n+1}$, and set $X := C - \mu$. From \eqref{ewo1} and \eqref{ewo2}, we have
$$
\mathbb{E}[X] = 0, \qquad \mathbb{E}[X^2] = \kappa_2 = O(\mu), \qquad \mathbb{E}[X^3] = O(\mu), \qquad \mathbb{E}[X^4] = O(\mu^2).
$$

For $f(x) := \log(x+a)$, we apply a finite Taylor expansion:
$$
f(C) = f(\mu) + f'(\mu)(C-\mu) + \frac{1}{2} f''(\xi)(C-\mu)^2,
$$
which can be rewritten as
\begin{equation}\label{re1}
\log(C+a) = \log(\mu+a) + \frac{X}{\mu+a} + R,
\end{equation}
where
$$
R = -\frac{X^2}{2(\xi+a)^2}.
$$

On the event $G$, both $C$ and $\mu$ are at least $\mu/2$.
Since $\xi$ lies between $C$ and $\mu$, we have $\xi\geq\mu/2$, and therefore
$$
\mathbb{E}\left[|R| \mathbf{1}_{G}\right]
\leq \frac{2}{\mu^2} \mathbb{E}[(C-\mu)^2]
= \frac{1}{\mu^2} O(\mu)
= O(1/\mu).
$$

On the complementary event $G^c=\{C < \mu/2\}$ (with $C \geq 1$), the Chernoff bound gives
$$
\mathbb{P}(C < \mu/2) \leq e^{-\mu/8}.
$$
Using the bound
$$
|R| \leq |\log(C+a)| + |\log(\mu+a)| + \frac{|C-\mu|}{\mu+a},
$$
we note that $|\log(C+a)| = O(\log \mu)$, $|\log(\mu+a)| = O(\log \mu)$, and since $|C-\mu| \leq \mu$, the last term is $O(1)$. Hence $|R| = O(\log \mu)$, and consequently
$$
\mathbb{E}\left[|R| \mathbf{1}_{G^c}\right]
= O(\log \mu) \, \mathbb{P}(C < \mu/2)
= O(e^{-\mu/8} \log \mu)
= o(1/\mu).
$$
Combining both cases, we obtain $|\mathbb{E}[R]|\leq
\mathbb{E}\left[|R|\mathbf{1}_G\right]+
\mathbb{E}\left[|R|\mathbf{1}_{G^c}\right]=
O\left(1/\mu\right)$. Taking expectations in \eqref{re1} yields the first equation of the Proposition.

For the second equation, we set $g(x) = x \log x$ and use the finite Taylor expansion
$$
g(C) = g(\mu) + g'(\mu)X + \frac{g''(\mu)}{2}X^2 + \frac{g'''(\mu)}{6}X^3 + \frac{g^{(4)}(\mu)}{24}X^4,
$$
which is equivalent to
\begin{equation}\label{re2}
C \log C = \mu \log \mu + (\log \mu + 1)X + \frac{X^2}{2\mu} - \frac{X^3}{6\mu^2} + R_4,
\end{equation}
where
$$
R_4 = \frac{X^4}{12 \xi^3}.
$$
On the event $G$, both $C$ and $\mu$ are at least $\mu/2$.
Since $\xi$ lies between $C$ and $\mu$, we have $\xi\geq\mu/2$, and therefore
$$
\mathbb{E}\left[|R_4|\mathbf{1}_{G}\right]
\leq \frac{16}{\mu^3} \mathbb{E}[X^4]
= O(1/\mu).
$$

On the complementary event $G^c=\{C < \mu/2\}$, we use the bound
$$
|R_4| \leq |C \log C| + |\mu \log \mu| +(\log \mu + 1)|X| +\frac{X^2}{2\mu} + \frac{|X|^3}{6\mu^2}.
$$
Since $|C \log C| = O(\mu \log \mu)$, $|\mu \log \mu| = O(\mu \log \mu)$, $\frac{X^2}{2\mu} = O(\mu)$, and $\frac{|X|^3}{6\mu^2} = O(\mu)$, it follows that $|R_4| = O(\mu \log \mu)$. Therefore,
$$
\mathbb{E}\left[|R_4|\mathbf{1}_{G^c}\right]
= O(\mu \log \mu) \, \mathbb{P}(C < \mu/2)
= O(\mu \log \mu \, e^{-\mu/8})
= o(1/\mu).
$$
Thus, we obtain $\mathbb{E}[R_4] = O(1/\mu)$. Taking expectations in \eqref{re2} gives the second equation of the Proposition.
\end{proof}

Now, we obtain the expressions for $E_j$, $1\leq j\leq r$.
\begin{prop}\label{prop:E-asymp-explicit}
The following asymptotic expansions hold for $E_1 := \mathbb{E}[C\log C] - \mathbb{E}[\log C]$ and $E_j := \mathbb{E}[\log(C+j-1)]$ for $j\geq 2$, where $C = C_{n+1}$:
\begin{align}
E_1 &= \log n \log\log n + (\gamma-1)\log\log n + \gamma + O(1), \label{E1-exp}\\
E_j &= \log\log n + O\left(\frac{1}{\log n}\right), \qquad j\geq 2. \label{Ej-exp}
\end{align}
Consequently, the following useful relation holds:
\begin{equation}
H_n E_2 - E_1 = \log\log n - \gamma + O(1). \label{E-relation}
\end{equation}
\end{prop}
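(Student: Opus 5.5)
The plan is to derive everything from Proposition \ref{cor:E-asymp} together with the standard expansion $H_{n+1}=\log n+\gamma+O(1/n)$. Write $\mu:=H_{n+1}$.

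\textbf{Step 1: the terms $E_j$ with $j\geq 2$.} Apply the first formula of Proposition \ref{cor:E-asymp} with $a=j-1\geq 1$. This gives
\begin{equation*}
E_j=\log(\mu+j-1)+O\!\left(\frac{1}{\log n}\right).
\end{equation*}
Write $\log(\mu+j-1)=\log\log n+\log\bigl(1+\tfrac{\gamma+j-1+O(1/n)}{\log n}\bigr)$. For fixed $j$ the second logarithm is $O(1/\log n)$, which yields \eqref{Ej-exp}.

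\textbf{Step 2: the term $E_1$.} By linearity, $E_1=\mathbb{E}[C\log C]-\mathbb{E}[\log C]$.
\begin{itemize}
\item For the first piece, use the second formula of Proposition \ref{cor:E-asymp}. It gives $\mu\log\mu+\kappa_2/(2\mu)+O(1/\log n)$.
\item Since $\kappa_2=\mu-p_2(n+1)$ and $p_2(n+1)\to\zeta(2)$, we have $\kappa_2/(2\mu)=\tfrac12+O(1/\log n)$, which is $O(1)$.
\item For the second piece, use the case $a=0$ of the first formula: $\mathbb{E}[\log C]=\log\mu+O(1/\log n)$. One caveat is that $\log(C+a)$ at $a=0$ is still fine because $C\geq 1$. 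The proof of Proposition \ref{cor:E-asymp} does cover $a=0$, since $\xi\geq\mu/2$ on $G$.
\end{itemize}
Combining, $E_1=(\mu-1)\log\mu+O(1)$. Next expand $\mu\log\mu$ with $\mu=\log n+\gamma+O(1/n)$ and $\log\mu=\log\log n+\gamma/\log n+O(1/\log^2 n)$. This gives
\begin{equation*}
\mu\log\mu=\log n\log\log n+\gamma\log\log n+\gamma+o(1).
\end{equation*}
Subtracting $\log\mu=\log\log n+o(1)$ gives $\log n\log\log n+(\gamma-1)\log\log n+\gamma+O(1)$. Note that the explicit $+\gamma$ is absorbed by the $O(1)$ anyway, so \eqref{E1-exp} holds as stated.

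\textbf{Step 3: the relation \eqref{E-relation}.} Use $H_n=\log n+\gamma+O(1/n)$ together with Step 1. Then
\begin{equation*}
H_nE_2=(\log n+\gamma)\log\log n+(\log n)\cdot O(1/\log n)+o(1)=\log n\log\log n+\gamma\log\log n+O(1).
\end{equation*}
The crucial point is that the error $O(1/\log n)$ in $E_2$, multiplied by $H_n\sim\log n$, stays $O(1)$. Subtracting Step 2 cancels the $\log n\log\log n$ term, and $\gamma\log\log n-(\gamma-1)\log\log n$ leaves $\log\log n+O(1)$. This is \eqref{E-relation}, since $-\gamma$ is again absorbed by $O(1)$.

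\textbf{Main obstacle.} The only delicate point is error bookkeeping: the $O(1/\log n)$ in $E_2$ is amplified by the factor $H_n$. This is exactly why the relation holds only up to $O(1)$ and not to $o(1)$. If a sharper constant were wanted, the next-order term of $\mathbb{E}[\log(C+1)]$ would be needed, namely $-\kappa_2/(2\mu^2)\approx -1/(2\log n)$, which contributes $-\tfrac12$.
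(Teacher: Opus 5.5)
Your proposal is correct and follows essentially the same route as the paper. Both arguments plug the two formulas of Proposition \ref{cor:E-asymp} into $H_{n+1}=\log n+\gamma+O(1/n)$ and $\log H_{n+1}=\log\log n+\gamma/\log n+o(1/\log n)$, bound $\kappa_2/(2H_{n+1})$ by $O(1)$ (the paper uses $\kappa_2=O(\log n)$ where you note the term tends to $\tfrac12$), and observe that the $O(1/\log n)$ error in $E_2$ multiplied by $H_n$ stays $O(1)$; your remark that the displayed constants $\pm\gamma$ are absorbed into these $O(1)$ terms is also accurate.
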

\begin{proof}
We begin with the expression for $E_1$. By linearity of expectation,
\begin{align}
E_1 &= \mathbb{E}[C\log C] - \mathbb{E}[\log C] \nonumber\\
&= \left[H_{n+1}\log H_{n+1} + \frac{\kappa_2}{2H_{n+1}} + O\left(\frac{1}{\log n}\right)\right] 
   - \left[\log H_{n+1} + O\left(\frac{1}{\log n}\right)\right] \nonumber\\
&= \log H_{n+1}(H_{n+1}-1) + \frac{\kappa_2}{2H_{n+1}} + O\left(\frac{1}{\log n}\right). \nonumber
\end{align}
Proposition \ref{cor:E-asymp}, together with the asymptotic expansions
$H_{n+1} = \log n + \gamma + O(1/n)$,
$\log H_{n+1} = \log\log n + \gamma/\log n + o(1/\log n)$ from \eqref{equahar},
and the bound $\kappa_2 = O(\log n)$ from \eqref{ewo2}, yields
\begin{align}
E_1 &= \left(\log\log n + \frac{\gamma}{\log n} + o\left(\frac{1}{\log n}\right)\right)
      \left(\log n + \gamma - 1 + O\left(\frac{1}{n}\right)\right) + O(1) \nonumber\\
&= \log n \log\log n + (\gamma-1)\log\log n + \gamma + O(1),
\end{align}
which proves \eqref{E1-exp}.

For $E_j$ with $j\geq 2$, we apply Proposition \ref{cor:E-asymp} and then use the Taylor expansion
$\log(H_{n+1}+j-1) = \log H_{n+1} + (j-1)/H_{n+1} + O(1/H_{n+1}^2)$,
valid since $j$ is fixed and $H_{n+1}\to\infty$. This gives
\begin{align}
E_j &= \mathbb{E}[\log(C+j-1)] \nonumber\\
&= \left[\log H_{n+1} + \frac{j-1}{H_{n+1}} + O\left(\frac{1}{H_{n+1}^2}\right)\right] + O\left(\frac{1}{\log n}\right) \nonumber\\
&= \log\log n + O\left(\frac{1}{\log n}\right),
\end{align}
which proves \eqref{Ej-exp}.

Finally, using \eqref{E1-exp} and \eqref{Ej-exp} with $j=2$, we obtain
\begin{align}
H_n E_2 - E_1
&= H_n\left(\log\log n + O\left(\frac{1}{\log n}\right)\right)
   - \left(\log n \log\log n + (\gamma-1)\log\log n + \gamma + O(1)\right) \nonumber\\
&= (H_n - \log n)\log\log n - (\gamma-1)\log\log n - \gamma + O(1) \nonumber\\
&= (\gamma + O(1/n))\log\log n - (\gamma-1)\log\log n - \gamma + O(1) \nonumber\\
&= \log\log n - \gamma + O(1),
\end{align}
where we have used $H_n = \log n + \gamma + O(1/n)$. This establishes \eqref{E-relation}.
\end{proof}

Recall the expression obtained for the sums $\tilde{A}_r(n)$ in Corollary \ref{cor:tildeA-explicit}. The next result is the most significant contribution of the present paper:

\begin{thm}\label{thm:main}
Let $C = C_{n+1}$ and define the asymptotic proportions
\begin{equation}
\rho_1 := \lim_{n\to\infty} \frac{A_1(n)}{n\log\log n}, \qquad 
\tilde{\rho}_r := \lim_{n\to\infty} \frac{\tilde{A}_r(n)}{n\log\log n}\ \ (r\geq 2).
\end{equation}
Then $\rho_1 = 1$, and for $r\geq 2$,
\begin{equation}
 \tilde{\rho}_r = \frac{1}{r!}\left[(-1)^r + \sum_{j=3}^r (-1)^{r+j}\,\zeta(j-1)\right]\,.
\end{equation}
\end{thm}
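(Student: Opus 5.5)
The plan is to read everything off Corollary~\ref{cor:tildeA-explicit} and the asymptotics of Proposition~\ref{prop:E-asymp-explicit}. The case $r=1$ is handled separately.

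\emph{The case $r=1$.} By \eqref{equaprimera-segunda}, $A_1(n)=(n+1)\,\mathbb{E}[\log C_{n+1}]$. I would apply the first estimate of Proposition~\ref{cor:E-asymp} with $a=0$; the Taylor argument there only needs $C\ge 1$ and $\xi\ge\mu/2$ on $G$, so $a=0$ is admissible. This gives $\mathbb{E}[\log C]=\log H_{n+1}+O(1/\log n)=\log\log n+o(1)$, using $H_{n+1}=\log n+\gamma+O(1/n)$. Dividing by $n\log\log n$ gives $\rho_1=1$.

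\emph{The case $r\ge 2$.} Start from
\[
\tilde A_r(n)=\frac{n+1}{r!}\sum_{j=1}^r(-1)^{r+j}p_{j-1}(n)E_j .
\]
The $j=1$ and $j=2$ terms must be grouped together, because individually each is of size $n\log n\log\log n$, which is too large. With $p_0=1$ and $p_1(n)=H_n$, these two terms combine to
\[
\frac{n+1}{r!}\,(-1)^{r}\bigl(H_nE_2-E_1\bigr).
\]
By \eqref{E-relation} this equals $\frac{n+1}{r!}(-1)^r\bigl(\log\log n+O(1)\bigr)$. After dividing by $n\log\log n$ it contributes $(-1)^r/r!$. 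For $r=2$ this is already the whole answer, $\tilde\rho_2=1/2$, and the sum over $j\ge3$ is empty, as in the statement.

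For $3\le j\le r$, I would use $p_{j-1}(n)=\sum_{i\le n}i^{-(j-1)}=\zeta(j-1)+O(n^{2-j})$, which tends to $\zeta(j-1)$ since $j-1\ge2$. Together with $E_j=\log\log n+O(1/\log n)$ from \eqref{Ej-exp}, each such term is
\[
\frac{n+1}{r!}(-1)^{r+j}\zeta(j-1)\log\log n\,\bigl(1+o(1)\bigr).
\]
Since $r$ is fixed, the sum is finite and the limits can be taken term by term. Dividing by $n\log\log n$ and using $(n+1)/n\to1$ yields the claimed formula for $\tilde\rho_r$.

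\emph{Main obstacle.} The delicate step is the cancellation of the leading $\log n\log\log n$ parts of $H_nE_2$ and $E_1$. It requires the error in $H_nE_2-E_1$ to be $o(\log\log n)$, not merely $o(\log n\log\log n)$. I would check this carefully. The $O(1/\log n)$ error in $E_2$, multiplied by $H_n$, gives $O(1)$. The $\kappa_2/(2H_{n+1})=O(1)$ term in $E_1$ is also bounded. The only other $\log\log n$ contributions are the $(\gamma-1)\log\log n$ in $E_1$ and the $(H_n-\log n)\log\log n=\gamma\log\log n$ from $H_nE_2$. These leave exactly $1\cdot\log\log n$, which is what \eqref{E-relation} records. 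All remaining terms are $O(1)$, so they vanish after dividing by $\log\log n$.
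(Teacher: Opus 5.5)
Your proposal is correct and follows the paper's proof essentially step for step. For $r=1$ you use $A_1(n)=(n+1)\mathbb{E}[\log C_{n+1}]$ together with Proposition~\ref{cor:E-asymp}; for $r\geq 2$ you start from Corollary~\ref{cor:tildeA-explicit}, group the $j=1,2$ terms into $(-1)^r(H_nE_2-E_1)=(-1)^r(\log\log n+O(1))$ via the cancellation worked out in Proposition~\ref{prop:E-asymp-explicit}, and use $p_{j-1}(n)\to\zeta(j-1)$ and $E_j=\log\log n+O(1/\log n)$ for $j\geq 3$.
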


\begin{proof}
The case $r=1$ follows directly from the probabilistic interpretation in \eqref{equaprimera-segunda}, which yields
\[
A_1(n) = (n+1)\,\mathbb{E}[\log C_{n+1}].
\]
Applying Proposition \ref{cor:E-asymp} together with the asymptotic expansion for $\log H_{n+1}$ given in \eqref{equahar}, we obtain
\[
\mathbb{E}[\log C_{n+1}] = \log H_{n+1} + O(1/\log n)
= \log\log n + o(\log\log n).
\]
Consequently,
\[
\lim_{n\to\infty} \frac{A_1(n)}{n\log\log n}
= \lim_{n\to\infty} \frac{n+1}{n}
   \cdot \frac{\log\log n + o(\log\log n)}{\log\log n}
= 1.
\]
Thus $\rho_1 = 1$, as claimed.
For $r\geq 2$, Corollary \ref{cor:tildeA-explicit} gives $\tilde{A}_r(n) = \frac{n+1}{r!} \sum_{j=1}^r (-1)^{r+j} p_{j-1}(n) \, E_j$,
where $p_0(n):=1$, $p_1(n)=H_n$, and for $k\geq 2$, $p_k(n) = \zeta(k) + O(1/n^{k-1})$.
From the calculations in Section 3.4, we have
\begin{align}
E_1 &= \log n\log\log n + (\gamma-1)\log\log n + \gamma + O(1),\\
E_j &= \log\log n + O\left(\frac{1}{\log n}\right), \quad j\geq 2,
\end{align}
and the crucial relation
\begin{equation}
H_n E_2 - E_1 = \log\log n - \gamma + O(1). \label{eq:key-relation}
\end{equation}
Separating the terms $j=1$ and $j=2$ from the rest:
\begin{equation}
\tilde{A}_r(n)  = \frac{n+1}{r!} \Big[ (-1)^{r+1}(E_1 - H_n E_2) + \sum_{j=3}^r (-1)^{r+j} p_{j-1}(n) E_j \Big]
\end{equation}
From \eqref{eq:key-relation}, $E_1 - H_n E_2 = -(\log\log n - \gamma) + O(1)$. Hence
\begin{equation}
(-1)^{r+1}(E_1 - H_n E_2) = (-1)^r (\log\log n - \gamma) + O(1).
\end{equation}
For $j\geq 3$, using $p_{j-1}(n) = \zeta(j-1) + O(1/n^{j-2})$ and $E_j = \log\log n + O(1/\log n)$, we obtain
\begin{equation}
p_{j-1}(n) E_j = \zeta(j-1)\log\log n + O(1).
\end{equation}
Substituting into $\tilde{A}_r(n)$:
\begin{equation}
\tilde{A}_r(n) = \frac{n+1}{r!} \Big[ (-1)^r (\log\log n - \gamma) + \sum_{j=3}^r (-1)^{r+j} \zeta(j-1)\log\log n + O(1) \Big].
\end{equation}
In consequence, 
\begin{equation}
\begin{aligned}
\lim_{n\rightarrow\infty}\frac{\tilde{A}_r(n)}{n\log\log n} &= \lim_{n\rightarrow\infty}\frac{(1+\frac{1}{n})}{r!} \left[ (-1)^r -\frac{\gamma}{\log\log n} + \sum_{j=3}^r (-1)^{r+j} \zeta(j-1) + O\left(\frac{1}{\log\log n}\right)\right] \\&=\frac{1}{r!}\left[(-1)^r + \sum_{j=3}^r (-1)^{r+j}\zeta(j-1)\right].
\end{aligned}
\end{equation}
\end{proof}

\section{Realization of Espinosa's branches by divisors}\label{section3}
The author in \cite{Seg26} proposed the realization of Espinosa's branches, where each branch is associated with a specific set of divisors. Here we correct a speculation concerning the first subset of divisors realizing the first Espinosa branch. At the time, we suspected that divisors of integers with enough divisors always come in pairs. However, after performing computations for the existing 10000 colossally abundant numbers, we realized that it is more appropriate to consider all divisors rather than restricting to sums centered around the square root.

We define the sum $S_r = \sum_{i=1}^r \rho_i$ and, for a positive integer $N$ with enough divisors (not a perfect square), we enumerate the divisors,
\begin{equation}
    1 = d_1 < d_2 < \cdots  <d_{\tau(N)}= N.
\end{equation}

We define the cutoff divisors:
\begin{equation}
    d^{(r)}(N) := \max\left\{d_j : \sum_{i=1}^j \frac{N}{d_i} \leq S_rN \log \log N \right\}\,.
\end{equation}

In what follows we show $d^{(1)}(N)\approx e^{-\gamma} \log N$ for superabundant numbers. 

\begin{lem}
    Let $0 < c < 1$ be a constant. There exists a positive integer $N_0(c)$ such that for every superabundant number $N > N_0(c)$, we have
    \begin{equation*}
        \operatorname{lcm}\left(1,2,\ldots,\left\lfloor c \log N \right\rfloor\right) \bigm| N.
    \end{equation*}
\end{lem}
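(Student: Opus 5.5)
The plan is to derive the statement from the structure theory of superabundant numbers due to Alaoglu and Erd\H{o}s \cite{AlEr44}. Write a superabundant $N$ as $N=\prod_{p\le P} p^{k_p}$, where $P=P(N)$ is its largest prime factor. Two facts from \cite{AlEr44} do all the work.

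\emph{(i) Exponent structure.} Every prime $p\le P$ divides $N$, the exponents are non-increasing, $k_P=1$, and for primes $p<q$ dividing $N$,
\[
k_p\;\ge\;\Big\lfloor \frac{k_q\log q}{\log p}\Big\rfloor .
\]
For $q=P$ this reads $k_p\ge\lfloor \log P/\log p\rfloor$. In other words, $p^{k_p}$ is at least the largest power of $p$ not exceeding $P$. Since $\operatorname{lcm}(1,\ldots,M)=\prod_{p\le M}p^{\lfloor \log M/\log p\rfloor}$, this gives
\[
\operatorname{lcm}(1,2,\ldots,P)\mid N .
\]

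\emph{(ii) Size of the largest prime.} One has $P\sim\log N$ as $N\to\infty$ through superabundant numbers.

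Given (i) and (ii), fix $0<c<1$. By (ii) there is $N_0(c)$ such that $P(N)\ge c\log N$ for all superabundant $N>N_0(c)$. Then $\lfloor c\log N\rfloor\le P$, and divisibility of lcm's is monotone in the range, so $\operatorname{lcm}(1,\ldots,\lfloor c\log N\rfloor)\mid\operatorname{lcm}(1,\ldots,P)\mid N$. The restriction $c<1$ is exactly what absorbs the $o(\log N)$ error in $P\sim\log N$.

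If we do not want to simply cite (i), it can be recovered from the defining property of superabundance: $\sigma(m)/m<\sigma(N)/N$ for all $m<N$. Suppose $p<q$ and $p^{k_p+1}<q^{k_q}$ held in a form violating (i). Replacing $q^{k_q}$ by the smaller factor $p^{k_p+1}$, suitably balanced, produces $m<N$. Using $\sigma(p^a)/p^a=(1-p^{-a-1})/(1-p^{-1})$, the gain from raising the exponent of the smaller prime $p$ exceeds the loss from lowering the exponent of $q$, so $\sigma(m)/m>\sigma(N)/N$, a contradiction. This exchange argument is the delicate step: it needs careful comparison of the local factors, and it is why I would prefer to quote \cite{AlEr44} directly.

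For (ii), the lower bound $\log N\ge\vartheta(P)\sim P$ is immediate by the prime number theorem. The upper bound needs $\log N=\sum_{p\le P}k_p\log p\le(1+o(1))P$. This follows from the companion estimate in \cite{AlEr44} that $p^{k_p}$ is at most of size $O(P)$, with the contribution of exponents $k_p\ge2$ being $O(\sqrt{P}\log P)$. I expect (ii), together with the uniformity in $c$, to be the main obstacle; the divisibility step itself is routine once (i) is available.
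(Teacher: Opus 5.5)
Your overall route is the paper's: show $\operatorname{lcm}(1,\ldots,P)\mid N$, then use $P(N)\sim\log N$ \cite[Th.~7]{AlEr44} and let $c<1$ absorb the $o(\log N)$ error. The problem is step (i) as you state it. The inequality $k_p\ge\lfloor k_q\log q/\log p\rfloor$ for all primes $p<q$ is \emph{false} for superabundant numbers in general. For example, $N=180=2^2\cdot 3^2\cdot 5$ is superabundant, yet for $p=2$, $q=3$ you get $\lfloor 2\log 3/\log 2\rfloor=3>2=k_2$; the same happens for $N=1260$. So you cannot quote (i) in that form, and no exchange argument can prove it. In your sketch, replacing $q^{k_q}$ by $p^{k_p+1}$ costs a factor $\sigma(q^{k_q})/q^{k_q}\ge 1+1/q$. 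The gain is only $1+p^{-k_p-1}$, so this yields $p^{k_p+1}>q$, not $p^{k_p+1}>q^{k_q}$.

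What you actually use is only the case $q=P$: $p^{k_p+1}>P$ for every prime $p<P$. This case is true and not delicate. Suppose $p^{k_p+1}<P$, and put $M=Np^{k_p+1}/P<N$. With $h(n)=\sigma(n)/n$,
\[
\frac{h(M)}{h(N)}=\bigl(1+p^{-k_p-1}\bigr)\frac{h(P^{k_P-1})}{h(P^{k_P})}\ge\frac{1+p^{-k_p-1}}{1+P^{-1}}>1,
\]
which contradicts superabundance. This is exactly the paper's argument, and it works for any $k_P\ge 1$, so you do not need $k_P=1$ either. Replace (i) by this special case, proved directly, and your proof coincides with the paper's.

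A minor point: in your aside on (ii), the bound $p^{k_p}=O(P)$ is too strong. Already for colossally abundant $N$ one has $2^{k_2}\asymp P\log P$. Since you, like the paper, simply cite \cite[Th.~7]{AlEr44} for $P(N)\sim\log N$, this does not affect the proof.
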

\begin{proof}
    If $N$ is superabundant, let $P$ denote its largest prime factor. For each prime $q < P$, let $\nu_q(N)$ denote the maximum exponent of $q$ in $N$, and write $a_q = \nu_q(N)$. We will prove that the following always holds:
\begin{equation}
    q^{a_q+1} > P.
\end{equation}

Assume $q^{a_q+1} < P$ (we can explude $q^{a_q+1} = P$ since $q<P$) and construct the integer $M := N q^{a_q+1}/P$, for which we have $M < N$. For a positive integer $n$, denote $h(n) = \sigma(n)/n$, which is a multiplicative function.

Since $N$ is a superabundant number, we obtain $h(M) < h(N)$. We denote by $a := \nu_q(N)$ and $b := \nu_P(N)$, we obtain
\begin{equation}
    \frac{h(M)}{h(N)}
    = \frac{h(q^{2a+1}) h(P^{b-1})}{h(q^a) h(P^b)}
    = \left(1 + \frac{1}{q^{a+1}}\right) \left(\frac{h(P^{b-1})}{h(P^b)}\right)
    \geq \frac{1 + 1/q^{a+1}}{1 + 1/P}
    > 1,
\end{equation}
which contradicts the assumption that $N$ is a superabundant number.

Now, we show that for $L := \operatorname{lcm}(1,2,\ldots,P)$, we have $L \bigm| N$. We can write $L$ as a product of powers of prime numbers  
\[
L = \prod_{q \leq P} q^{\lfloor \log_q P \rfloor},
\]
where $\log_q P$ is the exponent $x$ such that $q^x = P$. Thus $\lfloor \log_q P \rfloor$ is the largest integer $k$ such that $q^k \leq P$. The previous discussion implies that for every prime $q < P$ we have the inequality 
\[
q^{\lfloor \log_q P \rfloor} \leq P < q^{a_q+1}.
\]
Then $\lfloor \log_q P \rfloor \leq \nu_q(N) = a_q$, which shows that $L \bigm| N$.

Let $P(N)$ denote the largest prime factor of a superabundant number $N$.
For a sufficiently large superabundant number $N$, by \cite[Th. 7]{AlEr44} we have $P(N)\sim \log N$. 
For $\epsilon > 0$ with $1 - \epsilon > c$, we can find a positive integer $N_0(c)$ such that for superabundant numbers satisfying $N > N_0(c)$ we obtain 
\[
c < 1 - \epsilon < \frac{P(N)}{\log N}.
\]
Thus $c \log N < P(N)$ and hence 
\[
\operatorname{lcm}\left(1,\ldots,\lfloor c \log N \rfloor\right) \bigm| \operatorname{lcm}(1,\ldots,P(N)) \bigm| N.
\]
\end{proof}

\begin{prop}
For superabundant numbers, we have
\begin{equation}
    \lim_{N \rightarrow \infty} \frac{d^{(1)}(N)}{\log N} = e^{-\gamma}.
\end{equation}
\end{prop}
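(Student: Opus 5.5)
Since $S_1=\rho_1=1$, we have $d^{(1)}(N)=\max\{d_j : \sum_{i\le j} N/d_i \le N\log\log N\}$. Dividing by $N$, the condition reads
\[
F_N(x):=\sum_{d\mid N,\ d\le x}\frac1d\le \log\log N .
\]
The plan is to show that for any fixed $0<\epsilon<1$ and all large superabundant $N$:
\begin{itemize}
\item at $x_-=(1-\epsilon)e^{-\gamma}\log N$ the partial sum satisfies $F_N(x_-)<\log\log N$;
\item at $x_+=(1+\epsilon)e^{-\gamma}\log N$ it satisfies $F_N(x_+)>\log\log N$.
\end{itemize}
Because $F_N$ is nondecreasing, this forces $x_-\le d^{(1)}(N)<x_+$. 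Letting $\epsilon\to0$ gives the limit.

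\textbf{Lower bound on $F_N$ (at $x_+$).} Apply the preceding Lemma with a constant $c$ satisfying $(1+\epsilon)e^{-\gamma}<c<1$. This is possible since $e^{-\gamma}\approx0.56$. Then every integer $m\le c\log N$ divides $N$, so
\[
F_N(x_+)\ge H_{\lfloor x_+\rfloor}=\log\log N+\log\bigl((1+\epsilon)e^{-\gamma}\bigr)+\gamma+o(1)=\log\log N+\log(1+\epsilon)+o(1).
\]
For large $N$ this exceeds $\log\log N$.

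\textbf{Upper bound on $F_N$ (at $x_-$).} Here I use the trivial bound $F_N(x_-)\le H_{\lfloor x_-\rfloor}$, since the divisors of $N$ up to $x_-$ are among the integers up to $x_-$. The same harmonic asymptotic gives
\[
F_N(x_-)\le\log\log N+\log(1-\epsilon)+o(1)<\log\log N
\]
for large $N$.

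Combining the two bounds, $d^{(1)}(N)/\log N$ eventually lies in $\bigl[(1-\epsilon)e^{-\gamma},(1+\epsilon)e^{-\gamma}\bigr]$, with the left endpoint entering up to the negligible error from taking the largest divisor. For the lower end I also need a divisor close to $x_-$. Since every integer up to $c\log N$ divides $N$, the integer $\lfloor x_-\rfloor$ is itself a divisor, so $d^{(1)}(N)\ge\lfloor x_-\rfloor$.

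\textbf{Main obstacle.} The only nontrivial input is that all small integers up to a constant multiple $c\log N$ with $c>e^{-\gamma}$ divide $N$. This is exactly what the Lemma provides, through the Alaoglu--Erd\H{o}s estimate $P(N)\sim\log N$. The step needing care is the uniform harmonic asymptotic $H_x=\log x+\gamma+O(1/x)$ with $x\asymp\log N$. Once the divisors up to about $e^{-\gamma}\log N$ are shown to be exactly the full initial segment of integers, the rest is routine.
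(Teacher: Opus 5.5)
Your proof is correct and is essentially the paper's argument. Both use the Lemma (with $e^{-\gamma}<c<1$) to see that the divisors of $N$ up to $c\log N$ are exactly $1,2,\dots,\lfloor c\log N\rfloor$, so the cutoff condition becomes $H_D\le\log\log N<H_{D+1}$, and $H_D=\log D+\gamma+o(1)$ finishes; your $\epsilon$-sandwich at $x_\pm=(1\pm\epsilon)e^{-\gamma}\log N$ is just an explicitly quantified version of this (one small nit: take $\epsilon<e^{\gamma}-1$ rather than $\epsilon<1$, so that some $c\in\bigl((1+\epsilon)e^{-\gamma},1\bigr)$ exists, which is harmless since $\epsilon\to0$).
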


\begin{proof}
    Fix a constant $c$ such that $e^{-\gamma} < c < 1$. For a sufficiently large superabundant number $N$, put $M = \lfloor c \log N \rfloor$. Then, by the previous lemma,
    $$
      \sum_{i=1}^M \frac{N}{d_i}=NH_M\,,
     $$
    where $H_M$ is the harmonic number, where $H_M>\log\log N$. 
    Assume $D$ is the largest divisor such that
    $$
        \sum_{i=1}^D \frac{N}{d_i} \leq N \log \log N.
    $$
    The previous discussion implies
    $$
        H_D  \leq \log \log N.
    $$
    Thus, by our assumptions,
    $$
        \sum_{i=1}^{D+1} \frac{N}{d_i} > N \log \log N.
    $$
    Consequently,
    $$
        H_D \leq \log \log N < H_{D+1} = H_D + o(1).
    $$
    Since $H_D = \log D+\gamma + o(1)$, we obtain $\log D + \gamma = \log \log N + o(1)$, which completes the proof.
\end{proof}

\section*{Appendix}
\label{append}
We establish some technical inequalities used in the main text. Throughout, $E_1(x) = \int_x^\infty e^{-t}/t,dt$, $\operatorname{Ei}(x) = \int^x_{-\infty} e^{t}/t,dt$, $x>0$, denote the exponential integrals, and $H_n$ the $n$-th harmonic number.

\begin{lem}\label{lem:coeff-bound}
For the choice of coefficients $a_k = \log k + \frac{1}{k}$, with $k \geq 1$, and for any $n > 1$, we have the inequality
\begin{equation}
\sum_{k=1}^\infty \left(\log k + \frac{1}{k}\right)\frac{(H_n)^k}{k!}
\leq e^{H_n}\log H_n + \frac{2e^\gamma (n+1)}{\log n}.
\end{equation}
\end{lem}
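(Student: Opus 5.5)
The plan is to compare the coefficients $\log k+\frac1k$ with the coefficients $H_k-\gamma$, for which Section 1 already gives the closed form
\[
\sum_{k\ge0}\frac{H_k-\gamma}{k!}x^k=e^x\bigl(\log x+E_1(x)\bigr).
\]
The excess will then be controlled by an exponential-integral series. Throughout, write $x=H_n$.

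\textbf{Step 1 (coefficient comparison).} I will use the classical two-sided bound
\[
\frac{1}{2(k+1)}<H_k-\log k-\gamma<\frac{1}{2k},\qquad k\ge1,
\]
which gives $\log k<H_k-\gamma-\frac{1}{2(k+1)}$. Hence
\[
\log k+\frac1k\;\le\;(H_k-\gamma)+\Bigl(\frac1k-\frac{1}{2(k+1)}\Bigr)
\;=\;(H_k-\gamma)+\frac{k+2}{2k(k+1)} .
\]
The last coefficient is at most $\frac{1}{2k}+\frac{1}{2k(k+1)}\le \frac{1}{2k}+\frac{1}{k(k+1)}$.

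\textbf{Step 2 (summing).} Multiplying by $x^k/k!$ and summing, I will use the identity from Section 1 (noting the $k=0$ term $-\gamma$ there, which is harmless since it is negative). I will also use
\[
\sum_{k\ge1}\frac{x^k}{k\cdot k!}=\operatorname{Ei}(x)-\gamma-\log x .
\]
Together these give
\[
\sum_{k\ge1}\Bigl(\log k+\frac1k\Bigr)\frac{x^k}{k!}\le e^x\log x+e^xE_1(x)+\tfrac12\operatorname{Ei}(x)+R(x),
\]
where
\[
R(x)=\sum_{k\ge1}\frac{x^k}{k(k+1)\,k!}\le \frac{1}{x^2}\sum_k \frac{(k+2)\,x^{k+2}}{k(k+1)(k+2)!}=O\!\left(\frac{e^x}{x^2}\right).
\]

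\textbf{Step 3 (bounding the tail terms by $e^x/x$).} First, $e^xE_1(x)\le e^x/x$ is standard. Next, for $x\ge x_0$ I will use the asymptotic inequality $\operatorname{Ei}(x)\le \frac{e^x}{x}\bigl(1+\frac{2}{x}\bigr)$, proved by integrating by parts twice in $\int^x e^t/t\,dt$. Combining these,
\[
e^xE_1(x)+\tfrac12\operatorname{Ei}(x)+R(x)\le \Bigl(\tfrac32+\tfrac{C}{x}\Bigr)\frac{e^x}{x}\le 2\,\frac{e^x}{x}
\]
once $x$ is moderately large. Finally, $\log n<H_n<\log(n+1)+\gamma$ gives $e^{H_n}/H_n\le e^\gamma(n+1)/\log n$, which is the claimed bound.

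\textbf{Main obstacle.} The hard step is making Step 3 fully explicit for \emph{all} $n>1$, not just asymptotically. The constants in the $\operatorname{Ei}$ bound and in $R(x)$ must be pinned down so that the factor $\frac32+\frac{C}{x}$ drops below $2$ at an explicit threshold $x_0$ (I expect something like $x_0\approx 5$, i.e. $n$ in the low hundreds). The finitely many remaining $n$ I would check by direct numerical evaluation of both sides, much as the paper does for $n\le5040$ in Section 1. Since $\log n$ is small for small $n$, the right-hand side is generous there, so I expect this check to succeed easily.
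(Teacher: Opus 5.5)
Your outline follows the paper's proof. Both compare $\log k$ with $H_k-\gamma$, sum using $e^x(\log x+E_1(x))$ and $\sum_{k\ge1}x^k/(k\cdot k!)=\operatorname{Ei}(x)-\gamma-\log x$, bound the rest by a multiple of $e^x/x$, pass to $n$ via $\log n<H_n<\log(n+1)+\gamma$, and check small $n$ numerically.

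The one real difference is your sharper comparison $\log k<H_k-\gamma-\frac{1}{2(k+1)}$. It halves the $\operatorname{Ei}$ term at the cost of the remainder $R(x)$. The paper does not need it. It uses $e^xE_1(x)\le 1/x$ and absorbs $1/H_n<1/\log n<\log H_n$ (valid for $n\ge4$) into the surviving $-\log H_n$. The crude bound $\operatorname{Ei}(x)\le 2e^x/x$ for $x>2$ then suffices, so only $n=2,3$ are left to numerics.

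Three local slips need repair; none is fatal.

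(i) Dropping the $k=0$ term is not harmless in the direction you claim. In fact $\sum_{k\ge1}(H_k-\gamma)x^k/k!=e^x(\log x+E_1(x))+\gamma$, so you pick up an extra $+\gamma$. Your Step~2 display survives only because you also discarded $-\frac12(\gamma+\log x)$. The net term $\frac12(\gamma-\log x)$ is $\le 0$ only for $x\ge e^\gamma$, i.e.\ $n\ge3$.

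(ii) In rewriting $R(x)$ you kept a factor $1/(k+1)$ that should cancel, so the displayed inequality is false as written (your right side is smaller than $R(x)$). Correctly, $R(x)=x^{-2}\sum_{k\ge1}\frac{k+2}{k}\,\frac{x^{k+2}}{(k+2)!}\le 3e^x/x^2$. Exactly, $R(x)=\operatorname{Ei}(x)-\gamma-\log x-(e^x-1-x)/x$.

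(iii) The standard bound is $e^xE_1(x)\le 1/x$, not merely $e^x/x$. With your weaker version, your $\operatorname{Ei}$ bound and $R(x)\le 3e^x/x^2$, the total coefficient is $\frac32+\frac4x$. This drops below $2$ only for $x\ge 8$ (roughly $n\ge 1674$), not $x_0\approx5$. Using $e^xE_1(x)\le 1/x$ removes the need for Step~1's refinement and brings you back to the paper's argument.
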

\begin{proof}
    We have the following sequence of identities and inequalities:
    \begin{align}
    \sum_{k=1}^\infty \left(\log(k)+\frac{1}{k}\right)\frac{(H_n)^k}{k!}&=\sum_{k=1}^\infty \log(k)\frac{(H_n)^k}{k!}+\sum_{k=1}^\infty \frac{(H_n)^k}{k\cdot k!}\,,\\
    &<\sum_{k=1}^\infty (H_k-\gamma)\frac{(H_n)^k}{k!}+\operatorname{Ei}(H_n)-\gamma-\log(H_n)\,,\\
    &= e^{H_n}\log(H_n)+e^{H_n}E_1(H_n)+\operatorname{Ei}(H_n)-\log(H_n)\,.
\end{align}
where we used \cite[5.1.10]{AS66} together with $\log(k)<H_k-\gamma$, for $k\geq 1$, and \eqref{eq1}. 
Consequently, it remains to show
\begin{equation}\label{eq2}
e^{H_n}E_1(H_n)+\operatorname{Ei}(H_n)\leq\log(H_n)+\frac{2e^\gamma(n+1)}{\log(n)}\,.
\end{equation}
We have the well-known inequality $e^xE_1(x)\leq 1/x$, and one can show that for $x>2$, the inequality $\operatorname{Ei}(x)\leq 2e^x/x$ holds. In addition, $e^{H_n}/H_n\leq e^\gamma (n+1)/\log n$ follows from the classical bounds $\log n < H_n < \log(n+1)+\gamma$ for $n\geq 1$. Moreover, $\log H_n > 1/\log n$ for $n>3$.

Putting these estimates together, we obtain
\[
e^{H_n}E_1(H_n)+\operatorname{Ei}(H_n) \leq \frac{1}{H_n} + \frac{2e^{H_n}}{H_n}
\leq \frac{1}{\log n} + \frac{2e^\gamma(n+1)}{\log n}
\leq \log H_n + \frac{2e^\gamma(n+1)}{\log n},
\]
which establishes \eqref{eq2} for $H_n > 2$. The remaining cases where $H_n \leq 2$ (i.e., small $n$) can be verified directly by numerical computation.
\end{proof}

We have the following asymptotic expressions:
\begin{prop}
    \begin{equation}\label{equahar}
        H_{n+1}=\log n+\gamma+O\left(\frac{1}{n}\right)\textrm{ and }\log H_{n+1}=\log\log n+\frac{\gamma}{\log n}+o\left(\frac{1}{\log n}\right)\,.
    \end{equation}
\end{prop}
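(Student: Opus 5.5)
The first identity will come from comparing the harmonic sum with an integral. The second will follow by substituting the first into $\log$ and doing a first-order Taylor expansion.

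\emph{Step 1: the harmonic number.} I will use the classical expansion $H_m=\log m+\gamma+\frac{1}{2m}+O(m^{-2})$. It can be proved by Euler--Maclaurin, or more elementarily as follows. The sequence $\gamma_m:=H_m-\log m$ is decreasing, because $\frac{1}{m+1}<\log(m+1)-\log m$. Its successive differences are $O(m^{-2})$, so telescoping gives $H_m=\log m+\gamma+O(1/m)$. Applying this with $m=n+1$ and using $\log(n+1)=\log n+\log(1+1/n)=\log n+O(1/n)$ yields
\[
H_{n+1}=\log n+\gamma+O\!\left(\frac1n\right),
\]
which is the first claim.

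\emph{Step 2: the logarithm.} Factor out $\log n$:
\[
\log H_{n+1}=\log\log n+\log\!\left(1+\frac{\gamma+O(1/n)}{\log n}\right).
\]
Put $u:=(\gamma+O(1/n))/\log n$, so $u\to0$. The inequality $u-u^2/2\le\log(1+u)\le u$, valid for small $|u|$, gives $\log(1+u)=u+O(u^2)$. Here
\[
u=\frac{\gamma}{\log n}+O\!\left(\frac{1}{n\log n}\right),\qquad u^2=O\!\left(\frac{1}{\log^2 n}\right),
\]
and both error terms are $o(1/\log n)$. This gives $\log H_{n+1}=\log\log n+\frac{\gamma}{\log n}+o\!\left(\frac1{\log n}\right)$.

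\emph{Expected obstacle.} There is essentially none. The only point needing care is to bound the $O(u^2)$ remainder uniformly once $n$ is large enough that $|u|\le 1/2$. In fact the argument gives the sharper remainder $O(1/\log^2 n)$, which is stronger than the stated $o(1/\log n)$.
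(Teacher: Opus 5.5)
Your proof is correct and follows essentially the paper's argument. You derive $H_{n+1}=\log n+\gamma+O(1/n)$ by telescoping $H_m-\log m$, where the paper simply quotes the Euler--Maclaurin expansion, and you then expand $\log(1+u)=u+O(u^2)$ after factoring out $\log n$, which the paper does in two stages via $\log(\log n+\gamma)$. One small slip: the inequality $u-u^2/2\le\log(1+u)$ holds for $u\ge 0$, not for all small $|u|$, but this is harmless here because $u>0$ (since $H_{n+1}>\log n$).
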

\begin{proof}
    We use the expression for the harmonic number 
    \begin{equation}
        H_{n+1}=\log (n+1) +\gamma+\frac{1}{2(n+1)}-\frac{1}{12(n+1)^2}+\cdots 
    \end{equation}
Using the expansions $\log(n+1) = \log n + \log(1 + 1/n) = \log n + \frac{1}{n} - \frac{1}{2n^2} + O\left(\frac{1}{n^3}\right)$ and $\frac{1}{n+1} = \frac{1}{n} - \frac{1}{n^2} + O\left(\frac{1}{n^3}\right)$, we substitute into the expression above. After simplification, we obtain
\begin{equation}
H_{n+1} = \log n + \gamma + O\left(\frac{1}{n}\right),
\end{equation}
which establishes the first formula.

For the second, we take logarithms. Write $H_{n+1} = \log n + \gamma + \varepsilon_n$, where $\varepsilon_n = O(1/n)$. Then
\begin{equation}
\log H_{n+1} = \log\bigl(\log n + \gamma + \varepsilon_n\bigr) = \log(\log n + \gamma) + \log\left(1 + \frac{\varepsilon_n}{\log n + \gamma}\right).
\end{equation}

Set $L_n = \log n$ and $A_n = L_n + \gamma$. Note that $\varepsilon_n/A_n = O(1/(n \log n)) \to 0$ as $n \to \infty$. Using the expansion $\log(1+x) = x + O(x^2)$ for $x \to 0$, we obtain
\begin{equation}
\log H_{n+1} = \log A_n + \frac{\varepsilon_n}{A_n} + O\left(\frac{\varepsilon_n^2}{A_n^2}\right).
\end{equation}

Now, $\log A_n = \log(L_n + \gamma) = \log L_n + \frac{\gamma}{L_n} + O\left(\frac{1}{L_n^2}\right)$. Moreover, $\varepsilon_n/A_n = O(1/(n \log n)) = o(1/L_n)$, and the error term satisfies $O(\varepsilon_n^2/A_n^2) = O(1/(n^2 \log^2 n)) = o(1/L_n)$. Putting everything together,
\begin{align}
\log H_{n+1} &= \log L_n + \frac{\gamma}{L_n} + O\left(\frac{1}{L_n^2}\right) + \frac{\varepsilon_n}{A_n} + o\left(\frac{1}{L_n}\right) \\
&= \log\log n + \frac{\gamma}{\log n} + o\left(\frac{1}{\log n}\right),
\end{align}
as desired.
\end{proof}

\section*{Declarations}

\noindent\textbf{Funding} The author is supported by Investigadores por México SECIHTI.

\noindent\textbf{Conflict of interest} The author declares no conflict of interest.

\noindent\textbf{Data availability} The computational data analyzed in this study are derived from the publicly available OEIS sequences A004490 and A073751. No new primary dataset was generated.

\noindent\textbf{AI Usage Statement} During the preparation of this work, the author used two generative AI tools:

\begin{itemize}
    \item \textbf{DeepSeek} (version available in August 2026) was used exclusively for language polishing, grammar correction, and stylistic improvements of the English text. 
    
    \item \textbf{GPT-5.6 Sol.} (OpenAI) was used as a computational assistant in the following specific tasks:
    \begin{itemize}
    \item Optimization, debugging, and computational implementation assistance of the author's SageMath and Python code for processing the first 10,000 colossally abundant numbers generated by the prime sequence A073751 (assuming the Alaoglu-Erd\"os conjecture).
    \item Methodological suggestions during the development of Lemma 8.
    \item Assistance with the visualization and graphical rendering of Figure 1.
\end{itemize}
\end{itemize}

The author assumes full responsibility for the accuracy, integrity, and originality of the final manuscript. All AI-generated suggestions were critically reviewed, independently verified, and modified as necessary.


\newpage

\begin{thebibliography}{2}
    \bibitem{AlEr44} L. Alaoglu and P. Erd\"os, \emph{On highly Composite and Similar Numbers}. Transactions of the American Mathematical Society, Vol. 56, No. 3 (1944), pp. 448-469.
    \bibitem{AS66} M. Abramowitz and I. A. Stegun, \emph{Handbook of Mathematical Functions with Formulas, Graphs, and Mathematical Tables.} Math. of Computation, vol 20 iss. 93, 1966. 
    \bibitem{ABT00} R. Arriata, A.D. Barbour and S. Tavar\'e, \emph{Logarithmic combinatorial Structures: a Probabilistic Approach}, Eur. Math. Soc., Monographs in Mathematics (2000). 
    \bibitem{Da25} J. D. Espinosa ,\emph{A beautiful equivalence of the Riemann Hypothesis}\\ https://zenodo.org/records/18251515 (2025).
    \bibitem{Da26} J. D. Espinosa, \emph{The Assembly Theorem}\\ https://zenodo.org/records/18121760 (2026).
    \bibitem{GA26} R. G\'omez-A\'iza, \emph{Young-lattice diagonals and a doubly graded multiple-zeta decomposition of $e^\gamma$} https://arxiv.org/abs/2608.22561
    \bibitem{Lag02} J. C. Lagarias, \emph{An Elementary Problem Equivalent to the Riemann Hypothesis}, the American Mathematical Monthly, Vol. 109, No. 6 (2002), pp. 534-543.
    \bibitem{Mac95} I.G. Macdonald, \emph{Symmetric Functions and Hall Polynomials}, Oxford Math. Monographs, Second Edition (1995). 
    \bibitem{MP21} T. Morrill and D.J. Platt, \emph{Robin’s Inequality for 20-Free Integers}. INTEGERS: Electronic Journal of Combinatorial Number Theory 21 (2021), Article A28.
    \bibitem{Rie59} G. F. B. Riemann, \emph{Ueber die Anzahl der Primzahlen unter einer gegebenen Gr\"osse}, Monatsber. Akad. Berlin (1859), 671–680.
    \bibitem{Ro84} G. Robin, \emph{Grandes valeurs de la fonction somme des diviseurs et hypoth\`ese de Riemann}, J. Math. Pures Appl. 63 (1984), no. 2, 187–213.
    \bibitem{Seg26} C. Segovia \emph{The Riemann Hypothesis in Oaxaca} ArXivs.
\end{thebibliography}
\end{document}